\newtheorem*{theoremA}{Theorem A}
\newtheorem*{theoremB}{Theorem B}
\newtheorem*{theoremC}{Theorem C}
\newtheorem*{theoremAp}{Theorem A\cprime}
\newtheorem*{theoremBp}{Theorem B\cprime}
\newtheorem*{theoremCp}{Theorem C\cprime}
\newtheorem{proposition}{Proposition}[section]
\theoremstyle{definition}
\numberwithin{equation}{section}
\newcommand{\dif}{\,\mathrm{d}}
\newcommand{\R}{{\mathbb R}}
\newcommand{\Rem}{\mathcal{R}}
\newcommand{\A}{\mathcal{A}}
\newcommand{\Norm}[1]{\lVert #1 \rVert}
\newcommand{\abs}[1]{\lvert #1 \rvert}
\newcommand{\Bigabs}[1]{\Bigl\lvert #1 \Bigr\rvert}
\newcommand{\bigabs}[1]{\bigl\lvert #1 \bigr\rvert}
\newcommand{\biggabs}[1]{\biggl\lvert #1 \biggr\rvert}
\newcommand{\loc}{\mathrm{loc}}
\DeclareMathOperator{\Lin}{Lin}
\DeclareMathOperator{\supp}{supp}
\title[Nonlocal Hardy type inequality]{Nonlocal Hardy type inequalities
with optimal constants and remainder terms}
\author{Vitaly Moroz}
\address{Swansea University\\ Department of Mathematics\\ Singleton Park\\
Swansea\\ SA2~8PP\\ Wales, United Kingdom}	
\email{V.Moroz@swansea.ac.uk}
\author{Jean Van Schaftingen}
\address{Universit\'e Catholique de Louvain\\ Institut de Recherche en Math\'ematique et Physique\\
Chemin du Cyclotron 2 bte L7.01.01\\ 1348 Louvain-la-Neuve \\ Belgium}
\email{Jean.VanSchaftingen@uclouvain.be}
\keywords{Hardy-Littlewood-Sobolev inequality; fractional Hardy inequality; ground-state transformation; Stein-Weiss fractional integral inequality; Pitt's inequality; Riesz potential; fractional Laplacian}
\begin{document}

\begin{abstract}
Using a groundstate transformation, we give a new proof of the optimal Stein-Weiss inequality of Herbst
\[
\int_{\R^N} \int_{\R^N} \frac{\varphi (x)}{\abs{x}^\frac{\alpha}{2}}
I_\alpha (x - y) \frac{\varphi (y)}{\abs{y}^\frac{\alpha}{2}}\dif x \dif y
\le
\mathcal{C}_{N,\alpha, 0}\int_{\R^N} \abs{\varphi}^2,
\]
and of its combinations with the Hardy inequality by Beckner
\[
 \int_{\R^N} \int_{\R^N} \frac{\varphi (x)}{\abs{x}^\frac{\alpha + s}{2}}
I_\alpha (x - y) \frac{\varphi (y)}{\abs{y}^\frac{\alpha + s}{2}}\dif x \dif y
\le \mathcal{C}_{N, \alpha, 1}
\int_{\R^N} \abs{\nabla \varphi}^2,
\]
and with the fractional Hardy inequality
\[
\int_{\R^N} \int_{\R^N} \frac{\varphi (x)}{\abs{x}^\frac{\alpha + s}{2}}
I_\alpha (x - y) \frac{\varphi (y)}{\abs{y}^\frac{\alpha + s}{2}}\dif x \dif y
\le \mathcal{C}_{N, \alpha, s} \mathcal{D}_{N, s} \int_{\R^N} \int_{\R^N}
\frac{\bigabs{\varphi (x) - \varphi (y) }^2}{\abs{x-y}^{N+s}}\dif x \dif y
\]
where \(I_\alpha\) is the Riesz potential, \(0 < \alpha < N\) and \(0 < s < \min(N, 2)\).
We also prove the optimality of the constants.
The method is flexible and yields a sharp expression for the remainder terms in these inequalities.
\end{abstract}

\subjclass[2010]{26D10}

\date{\today}

\maketitle

\section{Introduction}

E.\thinspace Stein and G.\thinspace Weiss \cite{StWe1958} have proved that for every \(N \ge 1\) and \(\alpha \in (0, N)\)
there exists a constant \(C >0\) such that for every \(\varphi \in L^2 (\R^N)\),
\begin{equation}\label{in-SW}
 \int_{\R^N} \int_{\R^N} \frac{\varphi (x)}{\abs{x}^\frac{\alpha}{2}}
I_\alpha (x - y) \frac{\varphi (y)}{\abs{y}^\frac{\alpha}{2}}
\dif y\dif x
\le C \int_{\R^N} \abs{\varphi}^2,
\end{equation}
where \(I_\alpha\) is the Riesz potential defined for \(\alpha \in (0, N)\) and \(x \in \R^N \setminus \{0\}\) by
\[
I_\alpha (x) = \frac{\mathcal{A}_\alpha}{\abs{x}^{N-\alpha}},
\]
and the constant \(\A_\sigma\) is
\[
  \A_\alpha: = \frac{\Gamma \bigl(\tfrac{N - \alpha}{2}\bigr)}{2^{\alpha}\pi^{N/2} \Gamma \bigl(\tfrac{\alpha}{2}\bigr)},
\]
so that the Riesz potentials satisfy the semigroup property \(I_\alpha\ast I_\beta = I_{\alpha+\beta}\) for \(0<\alpha,\beta<N\) and \(\alpha+\beta<N\),
see \cite{Ri1949}*{p.\thinspace 19} or \cite{Landkof}*{chapter 1.1}.

The optimal constant in Stein--Weiss inequality \eqref{in-SW} was computed by I.\thinspace Herbst \cite{He1977}, who proved the following

\begin{theoremA}[I.\thinspace Herbst, 1977 \cite{He1977}*{theorem 2.5}]
For every \(N \ge 1\) and \(\alpha \in (0, N)\) it holds
\begin{equation*}
\mathcal{C}_{N, \alpha, 0}:=
\mathop{\sup_{\varphi\in L^2(\R^N)}}_{\Norm{\varphi}_{L^2}\le 1}\,\int_{\R^N} \int_{\R^N} \frac{\varphi (x)}{\abs{x}^\frac{\alpha}{2}}\,
I_\alpha (x - y)\, \frac{\varphi (y)}{\abs{y}^\frac{\alpha}{2}}\dif x \dif y
= \frac{1}{2^\alpha} \Bigl(\frac{\Gamma (\frac{N - \alpha}{4})}{\Gamma (\frac{N + \alpha}{4})} \Bigr)^2.
\end{equation*}
\end{theoremA}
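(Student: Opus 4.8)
The plan is to establish both the inequality and the optimality of the constant by a \emph{ground-state transformation}, the role of the (virtual) ground state being played by the homogeneous function $w(x):=\abs{x}^{-N/2}$. This function lies just outside $L^2(\R^N)$, but it is singled out by a symmetry of the problem: the bilinear form
\[
B(\varphi):=\int_{\R^N}\!\int_{\R^N}\frac{\varphi(x)}{\abs{x}^{\alpha/2}}\,I_\alpha(x-y)\,\frac{\varphi(y)}{\abs{y}^{\alpha/2}}\dif x\dif y
\]
is invariant under the Kelvin-type $L^2$-isometry $\varphi(x)\mapsto\abs{x}^{-N}\varphi(x/\abs{x}^2)$, which acts on the powers $\abs{x}^{-\gamma}$ by $\gamma\mapsto N-\gamma$, so that $w$ corresponds to the fixed exponent $\gamma=N/2$. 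The first step is to evaluate the action of the operator on $w$; using the classical Riesz/beta integral
\[
\int_{\R^N}\frac{\dif y}{\abs{x-y}^{\lambda}\abs{y}^{\mu}}=\pi^{N/2}\,\frac{\Gamma(\frac{N-\lambda}{2})\,\Gamma(\frac{N-\mu}{2})\,\Gamma(\frac{\lambda+\mu-N}{2})}{\Gamma(\frac{\lambda}{2})\,\Gamma(\frac{\mu}{2})\,\Gamma(N-\frac{\lambda+\mu}{2})}\,\abs{x}^{N-\lambda-\mu}
\]
with $\lambda=N-\alpha$ and $\mu=\frac{N+\alpha}{2}$, one obtains the eigenfunction identity
\[
\frac{1}{\abs{x}^{\alpha/2}}\int_{\R^N}I_\alpha(x-y)\,\frac{w(y)}{\abs{y}^{\alpha/2}}\dif y=\mathcal{C}_{N,\alpha,0}\,w(x),\qquad x\in\R^N\setminus\{0\};
\]
clearing the Gamma factors against the normalization of $\A_\alpha$ is precisely what produces the constant $\mathcal{C}_{N,\alpha,0}$ of the statement.

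Next, for $\varphi\in C^\infty_c(\R^N\setminus\{0\})$ I would insert this identity into $\mathcal{C}_{N,\alpha,0}\int_{\R^N}\abs{\varphi}^2$, symmetrise the resulting double integral in $x$ and $y$ (legitimate since all integrals converge absolutely --- this is checked using the eigenfunction identity to control the tails), and complete the square. With $h(x):=\abs{x}^{N/2}\varphi(x)$ this gives the exact remainder identity
\[
\mathcal{C}_{N,\alpha,0}\int_{\R^N}\abs{\varphi}^2-B(\varphi)=\frac12\int_{\R^N}\!\int_{\R^N}I_\alpha(x-y)\,\frac{\bigabs{h(x)-h(y)}^2}{\abs{x}^{(N+\alpha)/2}\abs{y}^{(N+\alpha)/2}}\dif x\dif y.
\]
Since $I_\alpha>0$, the right-hand side is nonnegative, which is the inequality of Theorem~A for such $\varphi$; it extends to arbitrary $\varphi\in L^2(\R^N)$ by first replacing $\varphi$ with $\abs{\varphi}$ (which does not decrease $B(\varphi)$, as $I_\alpha$ is positive) and then approximating $\abs{\varphi}$ monotonically from below by functions in $C^\infty_c(\R^N\setminus\{0\})$ and using monotone convergence. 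The identity also exhibits the sharp remainder term.

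For optimality I would test the quotient $B(\varphi)/\int\abs{\varphi}^2$ with near-ground states $\varphi_n(x):=\abs{x}^{-N/2}f_n(\log\abs{x})$, where $f_n\colon\R\to[0,1]$ is a smooth plateau equal to $1$ on $[-a_n,a_n]$, supported in $[-a_n-w_n,a_n+w_n]$, with $\abs{f_n'}\le C/w_n$, and $a_n\to\infty$, $w_n\to\infty$, $w_n/a_n\to0$. In logarithmic--polar coordinates $x=e^{s}\omega$ one has $\int_{\R^N}\abs{\varphi_n}^2=\abs{S^{N-1}}\int_\R f_n^2\ge 2\abs{S^{N-1}}a_n$, while the measure $\abs{x}^{-(N+\alpha)/2}\dif x$ and the Riesz kernel combine into a kernel depending only on $s-s'$, so that the remainder in the identity above becomes $\tfrac12\iint_{\R\times\R}\bar G(s-s')\bigabs{f_n(s)-f_n(s')}^2\dif s\dif s'$ for a nonnegative even kernel $\bar G$, obtained by integrating out the two angular variables, that satisfies $\int_\R\bar G(u)\min(1,u^2)\dif u<\infty$ and decays exponentially as $\abs u\to\infty$. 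Using $\bigabs{f_n(s)-f_n(s')}\le\min(1,\abs{s-s'}/w_n)$, the fact that $f_n$ is constant off the two transition zones, and the decay of $\bar G$, one splits this double integral according to whether a variable lies in a transition zone and bounds the remainder by $O(1/w_n)+O\bigl(a_n e^{-c w_n}\bigr)=o(a_n)$. Since $\int_{\R^N}\abs{\varphi_n}^2\to\infty$ at least linearly in $a_n$, the quotient tends to $\mathcal{C}_{N,\alpha,0}$, and together with the previous paragraph this completes the proof of Theorem~A.

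The main obstacle is the bookkeeping in this last step: because the form is nonlocal, one must verify that the long-range interactions between the plateau of $f_n$ and its exterior, as well as those involving the transition zones, contribute only $o\bigl(\int_{\R^N}\abs{\varphi_n}^2\bigr)$, which is exactly where the decay of $\bar G$ enters. The computation of the constant in the eigenfunction identity --- a careful Gamma-function manipulation tied to the precise value of $\A_\alpha$ --- is the other delicate point; the absolute convergence needed for the symmetrisation is routine once that identity is in hand.
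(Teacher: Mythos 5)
Your proposal is correct and follows essentially the same route as the paper: the eigenfunction identity for $\abs{x}^{-N/2}$ (your beta-integral computation is the semigroup property of Riesz potentials used in the paper), the ground-state transformation with completion of the square yielding exactly the remainder identity of Theorem~A\cprime, and truncated powers of $\abs{x}^{-N/2}$ to saturate the constant. The only cosmetic differences are that the paper keeps fixed-width multiplicative cutoffs (so the remainder is $O(1)$ by scale invariance rather than $o(a_n)$ via widening transition zones) and invokes a standard density argument in place of your monotone approximation.
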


Herbst's proof consists in writing the associated linear operator on \(L^2 (\R^N)\) as a convolution for the dilation group of simpler operators.
A close inspection of his proof suggests that the equality is not achieved and that almost extremizers of the inequality should be similar to \(x \mapsto \abs{x}^{-N/2}\). The proof was rediscovered independently under the name of Pitt's inequality by W.\thinspace Beckner \cite{Be1995}, who also obtained in \cite{Be2008} for \(N \ge 3\) the optimal constant for the combination of the Stein--Weiss inequality with the classical Hardy inequality,
\begin{equation*}
\int_{\R^N} \int_{\R^N} \frac{\varphi (x)}{\abs{x}^\frac{\alpha + 2}{2}}
I_\alpha (x - y) \frac{\varphi (y)}{\abs{y}^\frac{\alpha + 2}{2}}\dif x \dif y
\le \mathcal{C}_{N, \alpha, 1}\int_{\R^N} \abs{\nabla \varphi}^2,
\end{equation*}
which holds for every \(\varphi\in \dot{H}^1(\R^N)\).
Here \(\dot{H}^1 (\R^N)\) is the homogeneous Sobolev space,
obtained by completion of \(C^\infty_c (\R^N)\) with respect to the norm \(\Norm{\cdot}_{\dot{H}^1}\) defined by
\(\Norm{\varphi}_{\dot{H}^1}^2=\int_{\R^N}|\nabla\varphi|^2 \dif x\).

\begin{theoremB}[W.\thinspace Beckner, 2008 \cite{Be2008}*{theorem 4}]
For every \(N \ge 3\) and \(\alpha \in (0, N)\), it holds
\begin{equation*}
\mathcal{C}_{N, \alpha, 1}:=\mathop{\sup_{\varphi\in H^1(\R^N)}}_{\Norm{\nabla\varphi}_{L^2}\le 1}\,\int_{\R^N} \int_{\R^N} \frac{\varphi (x)}{\abs{x}^\frac{\alpha+2}{2}}
I_\alpha (x - y) \frac{\varphi (y)}{\abs{y}^\frac{\alpha+2}{2}}\dif x \dif y
=\frac{1}{2^{\alpha-2}} \Bigl(\frac{\Gamma (\frac{N - \alpha}{4})}{(N-2)\Gamma (\frac{N + \alpha}{4})} \Bigr)^2.
\end{equation*}
\end{theoremB}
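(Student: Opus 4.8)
I would deduce Theorem~B from Theorem~A by the substitution \(\varphi=\abs{x}\chi\), i.e.\ \(\chi=\varphi/\abs{x}\), which simultaneously converts the weighted double integral of Theorem~B into that of Theorem~A and the Dirichlet integral \(\int_{\R^N}\abs{\nabla\varphi}^2\) into \(\int_{\R^N}\abs{\chi}^2\), up to the classical Hardy inequality. Since \(\tfrac{\alpha+2}{2}-1=\tfrac{\alpha}{2}\), one has \(\abs{x}^{-(\alpha+2)/2}\varphi(x)=\abs{x}^{-\alpha/2}\chi(x)\), hence for \(\chi\in L^2(\R^N)\)
\[
\int_{\R^N}\int_{\R^N}\frac{\varphi(x)}{\abs{x}^{(\alpha+2)/2}}I_\alpha(x-y)\frac{\varphi(y)}{\abs{y}^{(\alpha+2)/2}}\dif x\dif y
=\int_{\R^N}\int_{\R^N}\frac{\chi(x)}{\abs{x}^{\alpha/2}}I_\alpha(x-y)\frac{\chi(y)}{\abs{y}^{\alpha/2}}\dif x\dif y,
\]
and \(\int_{\R^N}\abs{\chi}^2=\int_{\R^N}\varphi^2/\abs{x}^2\). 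For the purpose of computing the supremum one may restrict to \(\varphi\in C^\infty_c(\R^N\setminus\{0\})\) (dense in \(\dot H^1(\R^N)\)) or to \(\varphi\ge0\); positivity of \(I_\alpha\) together with Theorem~A then guarantees absolute convergence of the double integrals.

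\textbf{Upper bound.} Given \(\varphi\in\dot H^1(\R^N)\), set \(\chi=\varphi/\abs{x}\). Since \(N\ge3\), the classical Hardy inequality gives \(\int_{\R^N}\varphi^2/\abs{x}^2\le\frac{4}{(N-2)^2}\int_{\R^N}\abs{\nabla\varphi}^2\), so in particular \(\chi\in L^2(\R^N)\). Applying Theorem~A to \(\chi\) and then Hardy,
\[
\int_{\R^N}\int_{\R^N}\frac{\varphi(x)}{\abs{x}^{(\alpha+2)/2}}I_\alpha(x-y)\frac{\varphi(y)}{\abs{y}^{(\alpha+2)/2}}\dif x\dif y
\le\mathcal{C}_{N,\alpha,0}\int_{\R^N}\abs{\chi}^2
\le\frac{4\,\mathcal{C}_{N,\alpha,0}}{(N-2)^2}\int_{\R^N}\abs{\nabla\varphi}^2 ,
\]
and an elementary computation gives \(\frac{4}{(N-2)^2}\mathcal{C}_{N,\alpha,0}=\frac{1}{2^{\alpha-2}}\bigl(\frac{\Gamma((N-\alpha)/4)}{(N-2)\Gamma((N+\alpha)/4)}\bigr)^2\). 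Thus \(\mathcal{C}_{N,\alpha,1}\le\frac{4}{(N-2)^2}\mathcal{C}_{N,\alpha,0}\), the claimed value.

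\textbf{Optimality and remainder term.} For the converse inequality one needs a sequence \((\varphi_\eps)\) along which both steps above are asymptotic equalities. I would take \(\varphi_\eps\) a truncation and regularisation of \(\abs{x}^{-(N-2)/2}\) --- the scale-invariant, non-admissible profile of Hardy's inequality --- so that \(\int_{\R^N}\abs{\nabla\varphi_\eps}^2=\frac{(N-2)^2}{4}\int_{\R^N}\varphi_\eps^2/\abs{x}^2\,(1+o(1))\) as \(\eps\to0\). Then \(\chi_\eps=\varphi_\eps/\abs{x}\) is the corresponding truncation of \(\abs{x}^{-N/2}\), which (as suggested right after the statement of Theorem~A, and as the ground-state transformation proof of Theorem~A quantifies through its remainder term) is a maximising sequence for Herbst's quotient, so the first inequality above is also asymptotically an equality along \(\chi_\eps\). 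This yields \(\mathcal{C}_{N,\alpha,1}\ge\frac{4}{(N-2)^2}\mathcal{C}_{N,\alpha,0}\). The same bookkeeping produces the sharp remainder: if \(\mathcal{R}^0\ge0\) denotes the remainder in Theorem~A and one uses the ground-state identity \(\int_{\R^N}\abs{\nabla\varphi}^2-\frac{(N-2)^2}{4}\int_{\R^N}\varphi^2/\abs{x}^2=\int_{\R^N}\abs{x}^{2-N}\bigl\lvert\nabla(\abs{x}^{(N-2)/2}\varphi)\bigr\rvert^2\) for Hardy, then
\[
\mathcal{C}_{N,\alpha,1}\int_{\R^N}\abs{\nabla\varphi}^2
=\int_{\R^N}\int_{\R^N}\frac{\varphi(x)}{\abs{x}^{(\alpha+2)/2}}I_\alpha(x-y)\frac{\varphi(y)}{\abs{y}^{(\alpha+2)/2}}\dif x\dif y
+\mathcal{R}^0(\varphi/\abs{x})
+\mathcal{C}_{N,\alpha,1}\int_{\R^N}\abs{x}^{2-N}\bigl\lvert\nabla(\abs{x}^{(N-2)/2}\varphi)\bigr\rvert^2 ,
\]
with both additional terms nonnegative; in particular the supremum is not attained.

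\textbf{Main obstacle.} The upper bound is immediate once the substitution is in place, so the substantive point is the optimality: one must verify that the standard near-extremisers of Hardy's inequality, after multiplication by \(\abs{x}^{-1}\), are near-extremisers of Herbst's inequality. The conceptual reason is that the two scale-invariant profiles \(\abs{x}^{-(N-2)/2}\) and \(\abs{x}^{-N/2}\) differ precisely by the factor \(\abs{x}\); turning this into a quantitative statement about truncations is exactly what the remainder-term form of Theorem~A supplies, and is the one place where more than a line of computation is required.
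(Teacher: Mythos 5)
Your proof is correct and takes a genuinely different route from the paper. You deduce theorem~B (and, with the bookkeeping you indicate, theorem~B\cprime) from theorem~A via the substitution \(\chi=\varphi/\abs{x}\): the identity \(\abs{x}^{-(\alpha+2)/2}\varphi=\abs{x}^{-\alpha/2}\chi\) leaves the weighted double integral unchanged, and the passage from \(\int\abs{\chi}^2=\int\varphi^2/\abs{x}^2\) to \(\int\abs{\nabla\varphi}^2\) is exactly the classical Hardy inequality with its groundstate identity. This makes explicit the clean factorization \(\mathcal{C}_{N,\alpha,1}=\tfrac{4}{(N-2)^2}\,\mathcal{C}_{N,\alpha,0}\) and lets theorem~A and the Hardy groundstate representation be used as black boxes. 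The paper instead proves theorem~B\cprime{} in one step from proposition~\ref{propositionGroundStateloc}, taking \(u(x)=\abs{x}^{-(N-2)/2}\), \(A\) a positive scalar multiple of the identity and \(V=0\), so that the elliptic term and \(\int K(\cdot,y)u(y)\dif y\) are matched termwise; theorem~B then follows from B\cprime{} by the truncation family \(u_\lambda\). Your optimality argument is in substance the same as the paper's: truncations of \(\abs{x}^{-(N-2)/2}\) become, after dividing by \(\abs{x}\), precisely the truncations \(u_\lambda\) of \(\abs{x}^{-N/2}\) used in the paper's proof of theorem~A, and both the Herbst remainder and the Hardy remainder for this family remain bounded (by scale invariance) while the Dirichlet energy diverges. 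A small presentational advantage of the paper's unified proposition is that it points directly toward the fractional case of theorems~C and C\cprime; your route works there too, but would replace the classical Hardy groundstate identity by the Frank--Lieb--Seiringer fractional one and then factor \(\mathcal{C}_{N,\alpha,s}=\mathcal{C}_{N,\alpha,0}\,\mathcal{C}_{N,0,s}\).
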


In the present note, we give a simple new proof of theorems A and B. Our proof is based on groundstate transformations in the spirit of Agmon--Allegretto--Piepenbrink \citelist{\cite{Allegretto-74}\cite{Piepenbrink-74}\cite{Ag1983}}, which allow to derive sharp remainder representations in both inequalities. Herbst's inequality (theorem A) can be deduced from the following identity

\begin{theoremAp}
If \(N \ge 1\) and \(\alpha \in (0, N)\), then for every \(\varphi \in L^2 (\R^N)\) it holds
\begin{multline*}
\mathcal{C}_{N, \alpha, 0}
\int_{\R^N} \abs{\varphi}^2
= \int_{\R^N} \int_{\R^N} \frac{\varphi (x)}{\abs{x}^\frac{\alpha}{2}}
I_\alpha (x - y) \frac{\varphi (y)}{\abs{y}^\frac{\alpha}{2}}\dif x \dif y \\
+ \frac{1}{2} \int_{\R^N} \int_{\R^N}
\frac{I_\alpha (x - y)}{\abs{x}^\frac{N + \alpha}{2} \abs{y}^\frac{N + \alpha}{2}}
\bigabs{\varphi (x)\abs{x}^\frac{N}{2} - \varphi (y) \abs{y}^\frac{N}{2}}^2\dif x \dif y.
\end{multline*}
\end{theoremAp}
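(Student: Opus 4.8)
The strategy is the *ground-state transformation*: substitute $\varphi(x) = |x|^{-N/2}\,\psi(x)$ (equivalently, write $\varphi(x)|x|^{N/2}$ for the new unknown), which turns the Stein–Weiss bilinear form into a weighted bilinear form against the kernel $I_\alpha(x-y)/(|x|^{(N+\alpha)/2}|y|^{(N+\alpha)/2})$, and then exploit the elementary algebraic identity $ab = \tfrac12 a^2 + \tfrac12 b^2 - \tfrac12(a-b)^2$.

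First I would compute the constant $\mathcal{C}_{N,\alpha,0}$ by identifying $x\mapsto |x|^{-N/2}$ as the (formal) extremizer. Concretely, using the scaling relation — the classical Riesz-type integral $\int_{\R^N} I_\alpha(x-y)\,|y|^{-\frac{N-\alpha}{2}-\alpha}\dif y = \int_{\R^N}\frac{\A_\alpha}{|x-y|^{N-\alpha}}\,\frac{\dif y}{|y|^{\frac{N+\alpha}{2}}}$, which is a homogeneous function of $x$ of degree $-\frac{N+\alpha}{2}+\alpha = -\frac{N-\alpha}{2}$, hence equals $\lambda_{N,\alpha}\,|x|^{-\frac{N-\alpha}{2}}$ — the constant $\lambda_{N,\alpha}$ is evaluated by the standard beta-integral formula for $\int_{\R^N} |x-y|^{-(N-\alpha)}|y|^{-\beta}\dif y$ in terms of Gamma functions. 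One checks $\lambda_{N,\alpha} = \frac{1}{2^\alpha}\bigl(\Gamma(\tfrac{N-\alpha}{4})/\Gamma(\tfrac{N+\alpha}{4})\bigr)^2 = \mathcal{C}_{N,\alpha,0}$, matching Theorem A. Thus with $w(x) := |x|^{-N/2}$ and $g(x) := |x|^{-\frac{N-\alpha}{2}}$ we have $\int_{\R^N} I_\alpha(x-y)\,w(y)/|y|^{\alpha/2}\dif y = \mathcal{C}_{N,\alpha,0}\,g(x)/|x|^{\alpha/2}$, i.e. $w$ is a positive solution of the associated Euler–Lagrange equation with eigenvalue $\mathcal{C}_{N,\alpha,0}$.

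Next comes the identity itself. Set $u(x) := \varphi(x)|x|^{N/2}$, so that $\varphi(x)/|x|^{\alpha/2} = u(x)\,|x|^{-\frac{N+\alpha}{2}}$. Then the left double integral in Theorem A′ is $\int\!\!\int \frac{I_\alpha(x-y)}{|x|^{(N+\alpha)/2}|y|^{(N+\alpha)/2}} u(x)u(y)\dif x\dif y$, while the remainder double integral is $\tfrac12\int\!\!\int \frac{I_\alpha(x-y)}{|x|^{(N+\alpha)/2}|y|^{(N+\alpha)/2}}\bigl(u(x)-u(y)\bigr)^2\dif x\dif y$. Adding them and using $u(x)u(y) + \tfrac12(u(x)-u(y))^2 = \tfrac12 u(x)^2 + \tfrac12 u(y)^2$, the sum collapses, by the symmetry $x\leftrightarrow y$ of the kernel, to $\int_{\R^N} u(x)^2 \Bigl(\int_{\R^N} \frac{I_\alpha(x-y)}{|x|^{(N+\alpha)/2}|y|^{(N+\alpha)/2}}\dif y\Bigr)\dif x$. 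The inner integral is exactly $|x|^{-\frac{N+\alpha}{2}}\int_{\R^N} I_\alpha(x-y)|y|^{-\frac{N+\alpha}{2}}\dif y = |x|^{-\frac{N+\alpha}{2}}\cdot \mathcal{C}_{N,\alpha,0}\,|x|^{-\frac{N-\alpha}{2}} = \mathcal{C}_{N,\alpha,0}\,|x|^{-N}$, using the computation above with $w = |x|^{-N/2}$. Hence the sum equals $\mathcal{C}_{N,\alpha,0}\int_{\R^N} u(x)^2\,|x|^{-N}\dif x = \mathcal{C}_{N,\alpha,0}\int_{\R^N}|\varphi|^2$, which is the claimed identity.

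The main obstacle is analytic rather than algebraic: justifying that these manipulations are legitimate for an arbitrary $\varphi\in L^2(\R^N)$, where the individual double integrals need not converge absolutely and the pointwise substitution $u = \varphi|x|^{N/2}$ is singular at $0$ and at $\infty$. I would handle this by first proving the identity for $\varphi$ in a dense convenient class — say $\varphi \in C_c^\infty(\R^N\setminus\{0\})$, or $\varphi$ such that $u$ is bounded with compact support away from the origin — where Fubini applies and every term is finite, and then passing to the limit. The limiting argument requires knowing that both the Stein–Weiss form (Theorem A, already available) and the remainder form are continuous, resp. lower semicontinuous, along an approximating sequence; the nonnegativity of the remainder integrand makes Fatou's lemma the natural tool on that side, and one must check that the approximation can be chosen so that no mass of the remainder is lost in the limit (this is where the precise behaviour of near-extremizers like $|x|^{-N/2}$, which are *not* in $L^2$, signals that the supremum is not attained and guides the choice of cutoffs). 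I expect this density-and-passage-to-the-limit step to be the technical heart of the argument; the rest is the two Gamma-function evaluations and the one-line algebraic identity.
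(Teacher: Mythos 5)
Your proposal is correct and takes essentially the same approach as the paper: the groundstate transformation built on the eigenfunction $w(x)=\abs{x}^{-N/2}$, the semigroup property of Riesz potentials to evaluate $\int I_\alpha(x-y)\abs{y}^{-(N+\alpha)/2}\dif y = \mathcal{C}_{N,\alpha,0}\abs{x}^{-(N-\alpha)/2}$, and the elementary identity $ab+\tfrac12(a-b)^2=\tfrac12(a^2+b^2)$ followed by a density argument. The only structural difference is that the paper packages the algebraic step into the abstract Proposition~\ref{propositionGroundStateloc} (testing the nonlocal equation against $\varphi^2/u$) before applying it with $A=0$, $V=\mathcal{C}_{N,\alpha,0}$, whereas you carry out the equivalent computation directly; also note a minor slip in your intermediate sentence, where the equation should read $\int I_\alpha(x-y)\,w(y)/\abs{y}^{\alpha/2}\dif y = \mathcal{C}_{N,\alpha,0}\,g(x)$, not $\mathcal{C}_{N,\alpha,0}\,g(x)/\abs{x}^{\alpha/2}$, though your final inner-integral computation is consistent with the correct form.
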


Beckner's inequality (theorem~B) is a consequence of its quantitative version:

\begin{theoremBp}
If \(N \ge 3\) and \(\alpha \in (0, N)\), then for every \(\varphi \in \dot{H}^1(\R^N)\) it holds
\begin{multline*}
\mathcal{C}_{N, \alpha, 1} \biggl(\int_{\R^N} \abs{\nabla \varphi}^2 - \int_{\R^N} \frac{\bigabs{\nabla \bigl(\abs{x}^\frac{N - 2}{2} \varphi (x)\bigr)}^2}{\abs{x}^{N - 2}} \dif x  \biggr)\\*
\shoveleft{\qquad\qquad=  \int_{\R^N} \int_{\R^N} \frac{\varphi (x)}{\abs{x}^\frac{\alpha + 2}{2}}
I_\alpha (x - y) \frac{\varphi (y)}{\abs{y}^\frac{\alpha + 2}{2}}\dif x \dif y} \\
+ \frac{1}{2} \int_{\R^N} \int_{\R^N}
\frac{I_\alpha (x - y)}{\abs{x}^\frac{N + \alpha}{2} \abs{y}^\frac{N + \alpha}{2}}
\bigabs{\varphi (x)\abs{x}^\frac{N - 2}{2} - \varphi (y) \abs{y}^\frac{N - 2}{2}}^2\dif x \dif y.
\end{multline*}
\end{theoremBp}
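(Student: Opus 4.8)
The plan is to deduce Theorem B\cprime\ from Theorem A\cprime\ by the change of unknown $\psi := \varphi/\abs{x}$, combined with the classical ground-state identity underlying the Hardy inequality. First I would record that identity: setting $v(x) := \abs{x}^\frac{N-2}{2}\varphi(x)$, so that $\varphi(x) = \abs{x}^{-\frac{N-2}{2}} v(x)$, one expands $\abs{\nabla \varphi}^2$; the resulting cross term equals, up to sign, $\frac{N-2}{2}\int_{\R^N}\abs{x}^{-N}\, x\cdot\nabla(v^2)\dif x$, which vanishes after integration by parts because $\operatorname{div}(\abs{x}^{-N}x) = 0$ on $\R^N\setminus\{0\}$. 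This yields
\[
\int_{\R^N}\abs{\nabla\varphi}^2 - \int_{\R^N}\frac{\bigabs{\nabla\bigl(\abs{x}^\frac{N-2}{2}\varphi(x)\bigr)}^2}{\abs{x}^{N-2}}\dif x = \Bigl(\frac{N-2}{2}\Bigr)^2\int_{\R^N}\frac{\abs{\varphi}^2}{\abs{x}^2}\dif x .
\]
I would establish this first for $\varphi\in C^\infty_c(\R^N)$ --- where, since $N\ge 3$, the weight $\abs{x}^{-(N-2)}$ is locally integrable and $v$ both vanishes fast enough at the origin and has compact support, so that no boundary term survives --- and then extend it to all of $\dot{H}^1(\R^N)$ by density of $C^\infty_c(\R^N)$ together with the Hardy inequality, which supplies the a priori bound $\varphi/\abs{x}\in L^2(\R^N)$ needed both for the limiting argument and for the step below.

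Next I would check the elementary relation between the optimal constants of Theorems A and B:
\[
\mathcal{C}_{N,\alpha,1}\Bigl(\frac{N-2}{2}\Bigr)^2 = \frac{1}{2^{\alpha-2}}\cdot\frac{1}{(N-2)^2}\Bigl(\frac{\Gamma(\frac{N-\alpha}{4})}{\Gamma(\frac{N+\alpha}{4})}\Bigr)^2\cdot\frac{(N-2)^2}{4} = \frac{1}{2^{\alpha}}\Bigl(\frac{\Gamma(\frac{N-\alpha}{4})}{\Gamma(\frac{N+\alpha}{4})}\Bigr)^2 = \mathcal{C}_{N,\alpha,0}.
\]
Combining this with the ground-state identity, the left-hand side of Theorem B\cprime\ equals $\mathcal{C}_{N,\alpha,0}\int_{\R^N}\abs{\varphi}^2/\abs{x}^2$.

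Finally I would apply Theorem A\cprime\ to $\psi := \varphi/\abs{x}$, which lies in $L^2(\R^N)$ by the Hardy inequality. Since $\psi(x)\abs{x}^{-\frac{\alpha}{2}} = \varphi(x)\abs{x}^{-\frac{\alpha+2}{2}}$ and $\psi(x)\abs{x}^\frac{N}{2} = \varphi(x)\abs{x}^\frac{N-2}{2}$, the two double integrals on the right-hand side of Theorem A\cprime\ applied to $\psi$ coincide term by term with those of Theorem B\cprime, while the left-hand side of Theorem A\cprime\ applied to $\psi$ is precisely $\mathcal{C}_{N,\alpha,0}\int_{\R^N}\abs{\varphi}^2/\abs{x}^2$, which by the preceding step equals the left-hand side of Theorem B\cprime. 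This establishes the claimed identity.

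The main obstacle is the analytic justification of the ground-state identity for arbitrary $\varphi\in\dot{H}^1(\R^N)$: one has to cope with the singular weight $\abs{x}^{-(N-2)}$ at the origin, check that $v = \abs{x}^\frac{N-2}{2}\varphi$ belongs to $\dot{H}^1_{\loc}(\R^N)$ with $\abs{x}^{-\frac{N-2}{2}}\nabla v\in L^2(\R^N)$ (which itself relies on the Hardy inequality), and verify that the integration by parts leaves no contribution at the origin or at infinity. Once this point is settled, everything else reduces to the arithmetic of Gamma functions and a direct appeal to Theorem A\cprime.
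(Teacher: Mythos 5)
Your proof is correct, but it follows a genuinely different route than the paper. The paper proves Theorem B\cprime{} by a single application of Proposition~\ref{propositionGroundStateloc}, taking \(u(x) = \abs{x}^{-(N-2)/2}\), \(K(x,y) = I_\alpha(x-y)\abs{x}^{-(\alpha+2)/2}\abs{y}^{-(\alpha+2)/2}\), \(A\) a constant multiple of the identity, and \(V = 0\): one verifies that \(-\mathcal{C}_{N,\alpha,1}\Delta u = \int K(\cdot,y)\,u(y)\dif y\) (which reduces to the same Gamma-function identity \(\mathcal{C}_{N,\alpha,1}\bigl(\tfrac{N-2}{2}\bigr)^2 = \mathcal{C}_{N,\alpha,0}\) you computed), and the proposition produces both remainder terms in one stroke. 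You instead factor the proof through Theorem~A\cprime: you use the classical Agmon--Allegretto--Piepenbrink groundstate identity for the local Hardy inequality to rewrite the left-hand side as \(\mathcal{C}_{N,\alpha,0}\int\abs{\varphi}^2/\abs{x}^2\), then apply Theorem~A\cprime{} to \(\psi = \varphi/\abs{x}\) and observe that the substitution maps every term of A\cprime{} onto the corresponding term of B\cprime. In effect, where the paper applies the combined groundstate representation once with \(A\neq 0\) and \(K\neq 0\), you perform the two constituent groundstate transformations (the local one with \(K=0\) and the nonlocal one with \(A=0\)) separately and chain them via the change of unknown. Both arguments rest on the same algebraic identity and the same use of the semigroup property of Riesz potentials. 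What your approach buys is a clean structural statement --- that B\cprime{} is literally A\cprime{} conjugated by the classical Hardy groundstate transform --- at the cost of having to justify two groundstate identities and one substitution instead of one. What the paper's route buys is uniformity: A\cprime, B\cprime{} and C\cprime{} all become instances of a single proposition, with C\cprime{} (where the local part is fractional) not amenable to your two-step reduction. Your attention to the justification of the local Hardy identity for all \(\varphi\in\dot{H}^1\) and to the role of the Hardy inequality in ensuring \(\varphi/\abs{x}\in L^2\) is exactly the right concern; the paper handles the analogous point by proving the identity first on \(C^1_c(\R^N\setminus\{0\})\) and then invoking density, which you could equally do.
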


In the limiting case \(\alpha = 0\), \(I_\alpha\) is Dirac's delta and we recover
the Agmon--Allegretto--Piepenbrink groundstate representation
\cites{Allegretto-74,Piepenbrink-74,Ag1983} for the classical local Hardy's inequality,
\[
  \int_{\R^N} \abs{\nabla\varphi}^2 \dif x =\Big(\frac{N - 2}{2}\Big)^2\int_{\R^N} \frac{\abs{\varphi(x)}^2}{\abs{x}^2}\dif x + \int_{\R^N} \frac{\bigabs{\nabla \bigl(\abs{x}^\frac{N - 2}{2} \varphi (x)\bigr)}^2}{\abs{x}^{N - 2}} \dif x .
\]

Our proof of theorems~A\cprime{} and B\cprime{} combines previously known groundstate representations with a novel version
of a groundstate representations which is designed to handle the nonlocal term on the right-hand side.

Our method is flexible enough to establish the optimal constants and sharp remainder representations in a family of nonlocal Hardy type inequalities,
which includes theorems~A\cprime{} and B\cprime{} as the limit cases. We prove

\begin{theoremC}
Let \(N \ge 1\), \(\alpha \in (0, N)\), \(s \in (0, 2)\) and \(s<N\). Then
\begin{equation*}
\mathcal{C}_{N, \alpha, s}:=\mathop{\sup_{\varphi\in \dot{H}^{s/2}(\R^N)}}_{\Norm{\varphi}_{{\dot{H}^{s/2}}}\le 1}\,\int_{\R^N} \int_{\R^N} \frac{\varphi (x)}{\abs{x}^\frac{\alpha+s}{2}}
I_\alpha (x - y) \frac{\varphi (y)}{\abs{y}^\frac{\alpha+s}{2}}\dif x \dif y  \\[-3ex]
= \frac{1}{2^{\alpha+s}}\Bigl(\frac{\Gamma\big(\frac{N-s}{4}\big)\Gamma\big(\frac{N-\alpha}{4}\big)}
{\Gamma\big(\frac{N+s}{4}\big)\Gamma\big(\frac{N+\alpha}{4}\big)}\Bigr)^2.
\end{equation*}
\end{theoremC}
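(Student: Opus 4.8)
The plan is to prove Theorem~C by a groundstate transformation built on the virtual groundstate $W_s(x):=\abs{x}^{-(N-s)/2}$, which interpolates between the weight $\abs{x}^{-N/2}$ of Theorem~A\cprime{} (the case $s=0$) and the weight $\abs{x}^{-(N-2)/2}$ of Theorem~B\cprime{} (the case $s=2$). Writing the fractional Dirichlet form as $\Norm{\varphi}_{\dot{H}^{s/2}}^2=\mathcal{D}_{N,s}\int_{\R^N}\int_{\R^N}\bigabs{\varphi(x)-\varphi(y)}^2\abs{x-y}^{-N-s}\dif x\dif y$ (available precisely because $0<s<2$) and abbreviating the nonlocal form by $Q_s(\varphi):=\int_{\R^N}\int_{\R^N}\varphi(x)\abs{x}^{-(\alpha+s)/2}I_\alpha(x-y)\varphi(y)\abs{y}^{-(\alpha+s)/2}\dif x\dif y$, the point is that the substitution $\varphi=W_s v$, that is $v(x)=\abs{x}^{(N-s)/2}\varphi(x)$, turns both $\Norm{\varphi}_{\dot{H}^{s/2}}^2$ and $Q_s(\varphi)$ into the same intermediate Hardy weight $\int_{\R^N}\abs{\varphi(x)}^2\abs{x}^{-s}\dif x$, up to nonnegative remainders; comparing the two then reduces Theorem~C to the (sharp) fractional Hardy inequality.

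First I would record that $W_s$ is a homogeneous solution of the associated Euler--Lagrange equation. On the one hand, by the classical formula for the fractional Laplacian of a power, $(-\Delta)^{s/2}W_s=\Lambda_{N,s}\abs{x}^{-s}W_s$ with $\Lambda_{N,s}=2^{s}\Gamma(\tfrac{N+s}{4})^2/\Gamma(\tfrac{N-s}{4})^2$ the optimal constant in the fractional Hardy inequality. On the other hand, writing $\abs{x}^{-\gamma}=\A_{N-\gamma}^{-1}I_{N-\gamma}$ and using the semigroup law $I_\alpha\ast I_\beta=I_{\alpha+\beta}$, one computes, with $\gamma=(N+\alpha)/2$ (the exponent for which the relevant Riesz potential converges, which is exactly where $0<\alpha<N$ is used),
\[
  \frac{1}{\abs{x}^{(\alpha+s)/2}}\,I_\alpha\ast\Bigl(\frac{W_s}{\abs{\cdot}^{(\alpha+s)/2}}\Bigr)=\frac{\A_{(N+\alpha)/2}}{\A_{(N-\alpha)/2}}\,\abs{x}^{-s}\,W_s .
\]
Comparing these two eigenvalue-type relations and inserting the explicit value of $\A_\sigma$, a short Gamma-function manipulation gives $\A_{(N+\alpha)/2}/\A_{(N-\alpha)/2}=\Lambda_{N,s}\,\mathcal{C}_{N,\alpha,s}$ with $\mathcal{C}_{N,\alpha,s}$ exactly the constant in the statement; this identity is the only genuinely computational step.

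Next I would carry out the groundstate substitution in both quadratic forms. Expanding $\bigabs{v(x)-v(y)}^2$ and using the symmetry of the kernels yields, on the one hand, the known groundstate representation for $(-\Delta)^{s/2}$ with respect to $W_s$,
\[
  \Norm{\varphi}_{\dot{H}^{s/2}}^2-\Lambda_{N,s}\int_{\R^N}\frac{\abs{\varphi}^2}{\abs{x}^s}\dif x=\mathcal{D}_{N,s}\int_{\R^N}\int_{\R^N}\frac{W_s(x)W_s(y)}{\abs{x-y}^{N+s}}\,\bigabs{v(x)-v(y)}^2\dif x\dif y=:\Rem_s(\varphi)\ge 0 ,
\]
and, on the other hand, the (novel) nonlocal groundstate representation
\[
  \frac{\A_{(N+\alpha)/2}}{\A_{(N-\alpha)/2}}\int_{\R^N}\frac{\abs{\varphi}^2}{\abs{x}^s}\dif x-Q_s(\varphi)=\frac12\int_{\R^N}\int_{\R^N}\frac{I_\alpha(x-y)}{\abs{x}^{(N+\alpha)/2}\abs{y}^{(N+\alpha)/2}}\,\bigabs{\varphi(x)\abs{x}^{(N-s)/2}-\varphi(y)\abs{y}^{(N-s)/2}}^2\dif x\dif y\ge 0 .
\]
Eliminating $\int_{\R^N}\abs{\varphi}^2\abs{x}^{-s}\dif x$ between the two and using $\A_{(N+\alpha)/2}/\A_{(N-\alpha)/2}=\Lambda_{N,s}\,\mathcal{C}_{N,\alpha,s}$ produces the quantitative identity
\[
  \mathcal{C}_{N,\alpha,s}\Norm{\varphi}_{\dot{H}^{s/2}}^2=Q_s(\varphi)+\mathcal{C}_{N,\alpha,s}\Rem_s(\varphi)+\frac12\int_{\R^N}\int_{\R^N}\frac{I_\alpha(x-y)}{\abs{x}^{(N+\alpha)/2}\abs{y}^{(N+\alpha)/2}}\,\bigabs{\varphi(x)\abs{x}^{(N-s)/2}-\varphi(y)\abs{y}^{(N-s)/2}}^2\dif x\dif y ,
\]
which is the exact analogue of Theorems~A\cprime{} and B\cprime{} and degenerates to them as $s\to 0$ and $s\to 2$; since both remainders are nonnegative it gives $Q_s(\varphi)\le\mathcal{C}_{N,\alpha,s}\Norm{\varphi}_{\dot{H}^{s/2}}^2$, so $\mathcal{C}_{N,\alpha,s}$ bounds the supremum from above. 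For the matching lower bound I would test the Rayleigh quotient against a family $\varphi_\eps\in\dot{H}^{s/2}(\R^N)$ obtained by truncating and mollifying $W_s$, chosen so that $v_\eps=W_s^{-1}\varphi_\eps\to 1$ and $\varphi_\eps$ is an optimising sequence for the fractional Hardy inequality; then $\Rem_s(\varphi_\eps)=o(\Norm{\varphi_\eps}_{\dot{H}^{s/2}}^2)$ by the latter, while the nonlocal remainder stays bounded (its kernel is integrable away from the ``diagonal at infinity'', where $v_\eps$ is essentially constant), so $Q_s(\varphi_\eps)/\Norm{\varphi_\eps}_{\dot{H}^{s/2}}^2\to\mathcal{C}_{N,\alpha,s}$.

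The main obstacle is making the nonlocal groundstate representation rigorous for a general $\varphi\in\dot{H}^{s/2}(\R^N)$, rather than for smooth compactly supported functions: one must justify Fubini's theorem, control the pieces of the two double integrals that may individually diverge (both sides of the representation are to be read in $[0,+\infty]$), and handle the behaviour of the weights near the origin, exactly as in the proofs of Theorems~A\cprime{} and B\cprime{}. A secondary technical point is the asymptotic analysis of $\Rem_s(\varphi_\eps)$ and of the nonlocal remainder along the optimising family, which is where the hypotheses $0<s<\min(N,2)$ and $0<\alpha<N$ enter.
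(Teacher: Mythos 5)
Your proposal is correct and takes essentially the same approach as the paper: both go through the groundstate transformation built on $u(x)=|x|^{-(N-s)/2}$, arrive at the very same quantitative identity (the paper's Theorem~C\cprime), and then deduce optimality by testing against truncations of $u$ so that the remainder terms stay bounded while $\|\varphi\|_{\dot{H}^{s/2}}^2\to\infty$. The only difference is organizational: you split the computation into two groundstate representations (one for $(-\Delta)^{s/2}$, one for the weighted Riesz kernel) and eliminate the intermediate Hardy weight $\int|\varphi|^2|x|^{-s}$, whereas the paper's Proposition~\ref{propositionGroundState} packages both into a single identity for a function $u$ solving $(-\Delta)^{s/2}u=K\ast u$, with the Hardy weight appearing only in the verification that $u$ solves this equation; the resulting algebra and the analytical issues (mollification of $u$, Lebesgue dominated convergence, behaviour of the weights at $0$ and $\infty$) are the same in both presentations.
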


Here \(\dot{H}^{s}(\R^N)\) denotes the homogeneous Sobolev space obtained by completion of
\(C^\infty_c (\R^N)\) with respect to the norm \(\Norm{\cdot}_{\dot{H}^\frac{s}{2}}\) defined by
\[
 \Norm{\varphi}_{\dot{H}^\frac{s}{2}}^2 =  \mathcal{D}_{N, s}\int_{\R^N}\int_{\R^N}\frac{\bigabs{\varphi(x)-\varphi(y)}^2}{\abs{x-y}^{N+s}}\dif x  \dif y,
\]
where
\[
  \mathcal{D}_{N, s} = \frac{\Gamma\bigl(\tfrac{N + s}{2}\bigr) s}{2^{2 - s} \pi^{N/2} \Gamma\bigl(1 - \frac{s}{2} \bigr)}.
\]
The constant \(\mathcal{D}_{N, s}\) ensures that \(\lim_{s \to 0} \Norm{\varphi}_{\dot{H}^{s/2}} = \Norm{\varphi}_{L^2}\) and \(\lim_{s \to 2} \Norm{\varphi}_{\dot{H}^{s/2}} = \Norm{\nabla \varphi}_{L^2}\) \cite{MazyaNagel}.
In the limit \(\alpha=0\), \(I_\alpha\) is Dirac's delta and theorem\thinspace C yields the fractional Hardy inequality,
\begin{equation}\label{in-Hardy}
\int_{\R^N}\frac{\abs{\varphi (x)}^2}{\abs{x}^s}\dif x
\le \mathcal{C}_{N, 0, s}
\Norm{\varphi}_{\dot{H}^\frac{s}{2}},
\end{equation}
obtained by I.\thinspace Herbst \cite{He1977} and independently by D.\thinspace Yafaev \cite{Yafaev1999}.

The quantitative version of theorem C is

\begin{theoremCp}
Let \(N \ge 1\), \(\alpha \in (0, N)\), \(s\in(0,2)\) and \(s<N\). Then for all \(\varphi \in \dot{H}^{s/2}(\R^N)\), it holds
\begin{multline*}
\mathcal{D}_{N, s}\biggl(\int_{\R^N}\int_{\R^N}\frac{\bigabs{\varphi(x)-\varphi(y)}^2}{\abs{x-y}^{N+s}}\dif x  \dif y - \int_{\R^N} \int_{\R^N}
\frac{\bigabs{\abs{x}^\frac{N - s}{2}\varphi (x) - \abs{y}^\frac{N - s}{2}\varphi (y) }^2}{\abs{x}^\frac{N - s}{2}\abs{x-y}^{N+s}\abs{y}^\frac{N - s}{2}}\dif x \dif y \biggr)\\*
\shoveleft{\qquad\qquad= \frac{1}{\mathcal{C}_{N, \alpha, s}} \biggl(\int_{\R^N} \int_{\R^N} \frac{\varphi (x)}{\abs{x}^\frac{\alpha+s}{2}}
I_\alpha (x - y) \frac{\varphi (y)}{\abs{y}^\frac{\alpha+s}{2}}\dif x \dif y} \\*
+ \frac{1}{2} \int_{\R^N} \int_{\R^N}
\frac{I_\alpha (x - y)}{\abs{x}^\frac{N + \alpha}{2} \abs{y}^\frac{N + \alpha}{2}}
\bigabs{\varphi (x)\abs{x}^\frac{N-s}{2} - \varphi (y) \abs{y}^\frac{N-s}{2}}^2\dif x \dif y\biggr).
\end{multline*}
\end{theoremCp}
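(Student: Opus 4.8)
The plan is to deduce Theorem~C\cprime{} from Theorem~A\cprime{} by a change of unknown, supplemented by the ground-state representation behind the fractional Hardy inequality~\eqref{in-Hardy} and a one-line identity relating the three constants $\mathcal C_{N,\alpha,s}$, $\mathcal C_{N,\alpha,0}$, $\mathcal C_{N,0,s}$. Write $w(x):=\abs{x}^{-\frac{N-s}{2}}$ for the common virtual ground state and, for $\varphi\in\dot H^{s/2}(\R^N)$, set $\psi:=\abs{\cdot}^{-s/2}\varphi$; by the fractional Hardy inequality~\eqref{in-Hardy} we have $\psi\in L^2(\R^N)$ with $\int_{\R^N}\abs{\psi}^2=\int_{\R^N}\abs{x}^{-s}\abs{\varphi}^2$.

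First I would establish the ground-state representation for the Gagliardo form with ground state $w$: for every $\varphi\in\dot H^{s/2}(\R^N)$,
\begin{multline*}
\mathcal D_{N,s}\int_{\R^N}\int_{\R^N}\frac{\bigabs{\varphi(x)-\varphi(y)}^2}{\abs{x-y}^{N+s}}\dif x\dif y
=\frac{1}{\mathcal C_{N,0,s}}\int_{\R^N}\frac{\abs{\varphi(x)}^2}{\abs{x}^s}\dif x\\
+\mathcal D_{N,s}\int_{\R^N}\int_{\R^N}\frac{\bigabs{\abs{x}^\frac{N-s}{2}\varphi(x)-\abs{y}^\frac{N-s}{2}\varphi(y)}^2}{\abs{x}^\frac{N-s}{2}\abs{x-y}^{N+s}\abs{y}^\frac{N-s}{2}}\dif x\dif y.
\end{multline*}
This is the $\alpha=0$ counterpart of Theorem~C\cprime{} and reduces, as $s\to2$, to the Agmon--Allegretto--Piepenbrink identity quoted in the introduction. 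I would obtain it either by citing the known fractional Hardy ground-state representation, or directly: insert the pointwise identity $\bigabs{w(x)u(x)-w(y)u(y)}^2=w(x)w(y)\abs{u(x)-u(y)}^2+\bigl(w(x)-w(y)\bigr)\bigl(w(x)u(x)^2-w(y)u(y)^2\bigr)$ with $u:=\varphi/w$, symmetrize the cross term in $x\leftrightarrow y$, and use that $w$ solves $(-\Delta)^{s/2}w=\mathcal C_{N,0,s}^{-1}\abs{x}^{-s}w$ on $\R^N\setminus\{0\}$, equivalently $\mathcal D_{N,s}\,\mathrm{p.v.}\!\int_{\R^N}\frac{w(x)-w(y)}{\abs{x-y}^{N+s}}\dif y=\tfrac12\mathcal C_{N,0,s}^{-1}\abs{x}^{-s}w(x)$ --- this last being the explicit Riesz-potential computation of the fractional Laplacian of a power, which is precisely where the value of $\mathcal C_{N,0,s}$ enters. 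Rearranged, the displayed identity says exactly that the left-hand side of Theorem~C\cprime{} equals $\mathcal C_{N,0,s}^{-1}\int_{\R^N}\abs{x}^{-s}\abs{\varphi}^2$.

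Next I would apply Theorem~A\cprime{} to $\psi$. Since $\psi(x)\abs{x}^{-\alpha/2}=\varphi(x)\abs{x}^{-\frac{\alpha+s}{2}}$, $\psi(x)\abs{x}^{N/2}=\varphi(x)\abs{x}^{\frac{N-s}{2}}$ and $\int_{\R^N}\abs{\psi}^2=\int_{\R^N}\abs{x}^{-s}\abs{\varphi}^2$, Theorem~A\cprime{} becomes
\begin{multline*}
\mathcal C_{N,\alpha,0}\int_{\R^N}\frac{\abs{\varphi(x)}^2}{\abs{x}^s}\dif x
=\int_{\R^N}\int_{\R^N}\frac{\varphi(x)}{\abs{x}^\frac{\alpha+s}{2}}I_\alpha(x-y)\frac{\varphi(y)}{\abs{y}^\frac{\alpha+s}{2}}\dif x\dif y\\
+\frac12\int_{\R^N}\int_{\R^N}\frac{I_\alpha(x-y)}{\abs{x}^\frac{N+\alpha}{2}\abs{y}^\frac{N+\alpha}{2}}\bigabs{\varphi(x)\abs{x}^\frac{N-s}{2}-\varphi(y)\abs{y}^\frac{N-s}{2}}^2\dif x\dif y,
\end{multline*}
with all three terms finite. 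Finally, from the closed forms one reads off $\mathcal C_{N,\alpha,s}=\mathcal C_{N,\alpha,0}\,\mathcal C_{N,0,s}$ (the $\Gamma$-factors in $\alpha$ and in $s$ factor apart); hence the right-hand side of Theorem~C\cprime{}, which is $\mathcal C_{N,\alpha,s}^{-1}$ times the right-hand side of the last display, equals $\mathcal C_{N,\alpha,s}^{-1}\mathcal C_{N,\alpha,0}\int_{\R^N}\abs{x}^{-s}\abs{\varphi}^2=\mathcal C_{N,0,s}^{-1}\int_{\R^N}\abs{x}^{-s}\abs{\varphi}^2$, which by the previous paragraph is the left-hand side of Theorem~C\cprime{}. (In passing, the nonnegativity of the two remainders also recovers the inequality of Theorem~C.)

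I expect the main obstacle to be the rigorous justification of the Gagliardo ground-state representation for \emph{all} $\varphi\in\dot H^{s/2}(\R^N)$: the intermediate steps use only conditionally convergent principal-value integrals, and the transformed function $u=\abs{x}^{\frac{N-s}{2}}\varphi$ need not decay at infinity, so the splitting, Fubini, and the $x\leftrightarrow y$ symmetrization are not legitimate a priori. I would handle this by first proving the identity for $\varphi\in C_c^\infty(\R^N\setminus\{0\})$, where every integral is absolutely convergent, and then passing to the limit, using that the Gagliardo seminorm and the weighted $L^2$ term are continuous along $\dot H^{s/2}$-convergent sequences by~\eqref{in-Hardy}, while the remainder term is nonnegative and lower semicontinuous. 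The application of Theorem~A\cprime{} and the constant bookkeeping then carry no further difficulty.
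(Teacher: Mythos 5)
Your proof is correct, and it takes a genuinely different route from the paper. The paper first establishes a general nonlocal groundstate representation (Proposition~\ref{propositionGroundState}) for the form $\mathcal{D}_{N,s}\iint|{\varphi(x)-\varphi(y)}|^2/|x-y|^{N+s}$ against an equation with a source kernel $K$, handling simultaneously the fractional seminorm and the integral term via mollification of $u=|x|^{-(N-s)/2}$; it then applies this with $K(x,y)=\mathcal{C}_{N,\alpha,s}I_\alpha(x-y)|x|^{-(\alpha+s)/2}|y|^{-(\alpha+s)/2}$ and checks, via Fourier transform and the Riesz semigroup, that $u$ solves the associated nonlocal equation. You instead factor Theorem~C\cprime{} through two simpler identities: the Frank--Lieb--Seiringer fractional Hardy groundstate representation (the $\alpha=0$ case, quoted in the paper's introduction) shows the left-hand side of Theorem~C\cprime{} equals $\mathcal C_{N,0,s}^{-1}\int|x|^{-s}|\varphi|^2$, while Theorem~A\cprime{} applied to $\psi=|\cdot|^{-s/2}\varphi\in L^2$ shows the bracketed right-hand side equals $\mathcal C_{N,\alpha,0}\int|x|^{-s}|\varphi|^2$; the gamma-function identity $\mathcal C_{N,\alpha,s}=\mathcal C_{N,\alpha,0}\,\mathcal C_{N,0,s}$ (which indeed factors cleanly) then closes the loop. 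Your substitution $\psi=|\cdot|^{-s/2}\varphi$ and the resulting commutation of the two ``ground-state conjugations'' is the genuinely new idea here, and it buys modularity and a conceptual explanation of why the constants multiply. What it costs is that you still need the FLS identity, whose direct proof (as you observe) requires essentially the same mollification/regularization care that the paper packages in Proposition~\ref{propositionGroundState} — and which the paper needed anyway, because $u=|x|^{-(N-s)/2}\notin H^{s/2}_{\mathrm{loc}}$ so the original FLS hypothesis is not met. Two small cautions: the paper's display of the FLS identity has a typo ($|x|^2$ in the denominator should be $|x|^s$, as you correctly write), and Theorem~A\cprime{} must indeed be established for all $\psi\in L^2$ (not merely $C^1_c(\R^N\setminus\{0\})$), which the paper does by density — you invoke this correctly, so there is no gap.
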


The groundstate representation of theorem\thinspace C\cprime{} implies that the infimum in theorem\thinspace C is never achieved in \(\dot{H}^{s/2}(\R^N)\). In fact, the form of the remainder terms suggests that optimality in theorem\thinspace C is related to functions \(\varphi\) that satisfy  \(\varphi (x) \approx \abs{x}^{- (N-s) / 2}\) for \(x \in \R^N \setminus \{0\}\).

In the limiting case \(\alpha=0\) the nonlocal remainder term was obtained by R.\thinspace Frank, E.\thinspace Lieb and R.\thinspace Seiringer \cite{FLS2008}*{section 4} (see also \cite{FrSe2008} and \cite{Be2012}) in a nonlocal groundstate representation for the fractional Hardy inequality,
\begin{multline*}
\mathcal{D}_{N, s}\biggl(\int_{\R^N}\int_{\R^N}\frac{\bigabs{\varphi(x)-\varphi(y)}^2}{\abs{x-y}^{N+s}}\dif x  \dif y - \int_{\R^N} \int_{\R^N}
\frac{\bigabs{\abs{x}^\frac{N - s}{2}\varphi (x) - \abs{y}^\frac{N - s}{2}\varphi (y)}^2}{\abs{x}^\frac{N - s}{2}\abs{x-y}^{N+s}\abs{y}^\frac{N - s}{2}}\dif x \dif y \biggr)\\
= \frac{1}{\mathcal{C}_{N, 0, s}} \int_{\R^N} \frac{\abs{\varphi (x)}^2}{\abs{x}^2}\dif x .
\end{multline*}

A nonlocal groundstate transformation for a local Schr\"odinger operator is derived in section~\ref{sectionLocalGroundstate}, from which theorems A, A\cprime, B and B\cprime{} are deduced in section~\ref{sectionLocalInequalities}.
A general version of a groundstate representations for fractional operators is then obtained in section~\ref{sectionNonLocalGroundstate} below.
The proof of theorem C and C\cprime{} is given in Section \ref{sectionNonLocalInequalities}.

\section{A nonlocal groundstate representation for a Schr\"odinger operator}
\label{sectionLocalGroundstate}

Recall that if \(u>0\) is a solution of the local Schr\"odinger equation
\begin{equation}\label{local-Schrodinger}
-\Delta u+Vu=0\qquad \text{ in \(\R^N\)},
\end{equation}
then for all \(\varphi\in C^\infty_c(\R^N)\) it holds
\begin{equation}\label{AAP}
\int_{\R^N}\abs{\nabla \varphi^2}+\int_{\R^N} V\varphi^2=\int_{\R^N}\Bigabs{\nabla \Bigl( \frac{\varphi}{u} \Bigr)}^2 u^2.
\end{equation}
This identity can be derived by simply testing the equation \eqref{local-Schrodinger} against \(\frac{\varphi^2}{u}\) and by integrating by parts.

Identity \eqref{AAP} is known as the {\em groundstate representation} of the Schr\"odinger operator \(-\Delta+V\) with respect to a positive solution \(u\).
It was discovered independently by W.\thinspace Allegretto \cite{Allegretto-74} and J.\thinspace Piepenbrink \cite{Piepenbrink-74}.
We refer the readers to the lecture notes \cites{Ag1983,Ag1985} by S.\thinspace Agmon for a review of powerful applications
of the groundstate representation in the context of general second order elliptic operators on Riemannian manifolds.

In this section we are going to derive a version of the groundstate representation
for the nonlocal equation with a Schr\"odinger operator and an additional integral operator in the right-hand side,
\begin{equation*}
-\Delta u + Vu=\int_{\R^N} K (\cdot, y) u (y) \dif y\qquad \text{in \(\R^N\)}.
\end{equation*}
Nonlocal linear equation with such structure occur, for instance,
in the analysis of non\-linear Choquard (Schr\"odinger--Newton) equations, where groundstate representations
become an important tool for proving decay bounds on the solutions and nonlinear Liouville theorems \cite{MoVS2012}.

\begin{proposition}
\label{propositionGroundStateloc}
Let \(\Omega \subset \R^N\), \(u \in H^1_\mathrm{loc} (\Omega)\), \(A \in L^\infty_{\mathrm{loc}} (\Omega; \Lin(\R^N; \R^N))\) be self-adjoint almost everywhere in \(\Omega\), \(V \in L^1_\mathrm{loc} (\Omega)\) and \(K : \Omega \times \Omega \to [0, \infty)\) measurable such that for every \(x, y \in \Omega\), \(K (x, y) = K (y, x)\).
If \(V u \in L^1_{\mathrm{loc}} (\Omega)\), \(u^{- 1} \in L^\infty_{\mathrm{loc}} (\Omega)\) and for every nonnegative \(\psi \in C^1_c (\Omega)\),
\[
  \int_{\Omega} A[\nabla u] \cdot \nabla \psi + V u \psi = \int_{\Omega} \int_{\Omega} K (x, y) u (x) \psi (y) \dif y\dif x ,
\]
then for every \(\varphi \in C^1_c (\Omega)\),
\begin{multline*}
\int_{\Omega} A[\nabla \varphi] \cdot \nabla \varphi + V \abs{\varphi}^2
 = \int_{\Omega} \int_{\Omega} K (x, y) \,\varphi (x) \,\varphi (y) \dif y\dif x  + \int_{\Omega} A \Bigl[\nabla \Bigl(\frac{\varphi}{u}\Bigr)\Bigr] \cdot \nabla \Bigl(\frac{\varphi}{u}\Bigr)\\
 + \frac{1}{2} \int_{\Omega} \int_{\Omega} K (x,  y) \,u (x) \,u (y)\, \Bigabs{\frac{\varphi (x)}{u (x)} - \frac{\varphi (y)}{u (y)} }^2\dif y\dif x .
\end{multline*}
\end{proposition}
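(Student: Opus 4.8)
The plan is to mimic the classical derivation of the Agmon--Allegretto--Piepenbrink identity~\eqref{AAP}: test the equation for \(u\) against \(\psi=\varphi^2/u\), use the Leibniz rule to split off the weighted Dirichlet remainder, and symmetrise the resulting nonlocal term. We may assume \(u>0\), since the hypothesis and the conclusion are unchanged under \(u\mapsto -u\) and \(u\) has locally constant sign (a direct computation gives \(\nabla(u/\abs{u})=0\) almost everywhere).

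\emph{Step 1 (admissibility of \(\psi=\varphi^2/u\)).} First I would check that \(\psi:=\varphi^2/u\) is a legitimate test function. On \(K:=\supp\varphi\) one has \(u\in H^1(K)\) and \(u^{-1}\in L^\infty(K)\), hence \(1/u\in H^1(K)\cap L^\infty(K)\), so \(\psi=\varphi^2/u\) is a nonnegative, bounded \(H^1(\Omega)\) function supported in \(K\). Mollification produces nonnegative \(\psi_n\in C^\infty_c(\Omega)\) with \(\psi_n\to\psi\) in \(H^1\), \(0\le\psi_n\le\Norm{\psi}_{L^\infty}\), supports in a fixed compact neighbourhood of \(K\), and (along a subsequence) \(\psi_n\to\psi\) almost everywhere. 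Each term of the hypothesis then passes to the limit: the term \(\int_\Omega A[\nabla u]\cdot\nabla\psi_n\) because \(A[\nabla u]\in L^2_{\loc}(\Omega)\); the term \(\int_\Omega Vu\,\psi_n\) by dominated convergence from \(Vu\in L^1_{\loc}(\Omega)\) and the uniform bound on \(\psi_n\); and the nonlocal term by dominated convergence, once one knows that \(\int_\Omega\int_K K(x,y)\,u(x)\dif y\dif x<\infty\) — which itself follows from the hypothesis applied to some nonnegative \(\psi\in C^1_c(\Omega)\) with \(\psi\ge 1\) on \(K\), together with Tonelli and \(K\ge 0\). Since \(Vu\,\psi=V\varphi^2\) this yields
\[
\int_\Omega A[\nabla u]\cdot\nabla\Bigl(\frac{\varphi^2}{u}\Bigr)+\int_\Omega V\varphi^2
=\int_\Omega\int_\Omega K(x,y)\,u(x)\,\frac{\varphi(y)^2}{u(y)}\dif y\dif x .
\]

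\emph{Step 2 (local identity).} With \(w:=\varphi/u\in H^1(\Omega)\), the Leibniz rule \(\nabla\varphi=w\,\nabla u+u\,\nabla w\), the self-adjointness of \(A\), and \(\nabla(\varphi^2/u)=w^2\nabla u+2uw\,\nabla w\) give, almost everywhere (and trivially where \(\varphi\) vanishes),
\[
A[\nabla\varphi]\cdot\nabla\varphi
=A[\nabla u]\cdot\nabla\Bigl(\frac{\varphi^2}{u}\Bigr)
+\abs{u}^2\,A\Bigl[\nabla\Bigl(\frac{\varphi}{u}\Bigr)\Bigr]\cdot\nabla\Bigl(\frac{\varphi}{u}\Bigr),
\]
all terms being integrable on \(K\). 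Integrating over \(\Omega\) and inserting Step~1,
\[
\int_\Omega A[\nabla\varphi]\cdot\nabla\varphi+\int_\Omega V\abs{\varphi}^2
=\int_\Omega\int_\Omega K(x,y)\,u(x)\,\frac{\varphi(y)^2}{u(y)}\dif y\dif x
+\int_\Omega\abs{u}^2\,A\Bigl[\nabla\Bigl(\frac{\varphi}{u}\Bigr)\Bigr]\cdot\nabla\Bigl(\frac{\varphi}{u}\Bigr) .
\]

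\emph{Step 3 (symmetrisation).} Expanding the square gives the pointwise identity \(u(x)u(y)\abs{\varphi(x)/u(x)-\varphi(y)/u(y)}^2=\tfrac{u(y)}{u(x)}\varphi(x)^2-2\varphi(x)\varphi(y)+\tfrac{u(x)}{u(y)}\varphi(y)^2\), whose three pieces are absolutely integrable against \(K\) by Step~1 and the arithmetic--geometric mean inequality. Using \(K(x,y)=K(y,x)\) to identify the two outer pieces, one obtains
\[
\int_\Omega\int_\Omega K(x,y)\,u(x)\,\frac{\varphi(y)^2}{u(y)}\dif y\dif x
=\int_\Omega\int_\Omega K(x,y)\,\varphi(x)\,\varphi(y)\dif y\dif x
+\frac12\int_\Omega\int_\Omega K(x,y)\,u(x)\,u(y)\,\Bigabs{\frac{\varphi(x)}{u(x)}-\frac{\varphi(y)}{u(y)}}^2\dif y\dif x,
\]
and substituting this into Step~2 is exactly the asserted identity. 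The only real difficulty is Step~1 --- promoting \(\varphi^2/u\) to an admissible test function and establishing the finiteness of the nonlocal integral; Steps~2 and~3 are then the classical ground-state computation and a bookkeeping symmetrisation.
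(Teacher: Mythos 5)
Your proof is correct and follows essentially the same route as the paper's: test the equation with \(\psi=\varphi^2/u\) (justified by regularization), split off the weighted Dirichlet term by the pointwise Leibniz identity, and symmetrise the kernel term using \(K(x,y)=K(y,x)\); you merely spell out the admissibility and integrability steps that the paper compresses into ``classical regularization arguments.'' One remark worth recording: your Step 2 correctly carries the factor \(\abs{u}^2\) in front of \(A\bigl[\nabla(\varphi/u)\bigr]\cdot\nabla(\varphi/u)\), whereas the proposition as stated (and the displayed identity in the paper's proof) omits it --- this is a typo in the paper, as both the classical identity \eqref{AAP} and the application in theorem\thinspace B\cprime{} (where this term becomes \(\int_{\R^N}\abs{x}^{-(N-2)}\bigabs{\nabla(\abs{x}^{(N-2)/2}\varphi)}^2\dif x\)) confirm, so your final sentence should say that you obtain the corrected form of the asserted identity rather than its literal statement.
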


In the local case \(s=2\) an adaptation of groundstate representation \eqref{AAP}
to distributional solutions \(u\in L^1_{\loc}(\R^N)\) and singular potentials \(V\in L^1_{\loc}(\R^N)\)
was developed in \cite{Fall-II}*{lemma 1.4} (see also \citelist{\cite{CyconFroeseKirschSimon}*{theorem 2.12}\cite{Fall-I}*{lemma B.1}\cite{MoVS2012}*{proposition 3.1}}).

\begin{proof}[Proof of proposition~\ref{propositionGroundStateloc}]
First note that since \(A\) is locally bounded and \(V u \in L^1 (\Omega)\), by classical regularization arguments, we can take nonnegative and compactly supported \(\psi \in H^1 (\Omega) \cap L^\infty (\Omega)\) as test functions.
In particular, we can thus take \(\psi = \varphi^2 / u\) as a test function.
We compute, since \(A (x)\) is self-adjoint for almost every \(x \in \Omega\), that
\[
  A [\nabla \varphi] \cdot \nabla \varphi =
A[\nabla u] \cdot \nabla \Bigl( \frac{\varphi^2}{u}\Bigr)
+ A \Bigl[\nabla \Bigl(\frac{\varphi}{u}\Bigr)\Bigr] \cdot \nabla \Bigl(\frac{\varphi}{u}\Bigr)
\]
and since \(K\) is symmetric,
\[
\begin{split}
 \int_{\Omega}\int_{\Omega} K (x, y)\,u (x)\, \frac{\varphi (y)^2}{u (y)} \dif y\dif x
= &\frac{1}{2} \int_{\Omega}\int_{\Omega} K (x, y)\, \Bigl( u (x) \frac{\varphi (y)^2}{u (y)} + u (y) \frac{\varphi (x)^2}{u (x)}\Bigr) \dif y\dif x \\
= & \int_{\Omega}\int_{\Omega} K (x, y)\, \varphi (x)\, \varphi (y) \dif y\dif x \\
 & \qquad+ \frac{1}{2} \int_{\Omega} \int_{\Omega} K (x,  y)\, u (x)\, u (y)\, \Bigl(\frac{\varphi (x)}{u (x)} - \frac{\varphi (y)}{u (y)} \Bigr)^2\dif y\dif x ;
\end{split}
\]
this yields the conclusion.
\end{proof}

\section{Proofs of the inequalities of Herbst and Beckner}
\label{sectionLocalInequalities}

We first prove the quantitative version of Herbst's inequality:

\begin{proof}[Proof of theorem\thinspace A\cprime]
Take \(u (x) = \abs{x}^{- \frac{N}{2}}\) and
\[
  K (x, y) = \frac{I_\alpha (x - y)}{\abs{x}^\frac{\alpha}{2}\abs{y}^\frac{\alpha}{2}}.
\]
By the semigroup property of the Riesz potentials \cite{Ri1949}, for every \(y \in \R^N \setminus \{0\}\),
\[
 \int_{\R^N \setminus \{0\}} K (x, y) u (x)\dif x
 = \int_{\R^N \setminus \{0\}} \frac{1}{\abs{x}^\frac{N + \alpha}{2}} I_\alpha (x - y) \frac{1}{\abs{y}^\frac{\alpha}{2}}\dif y
 = \frac{1}{2^\alpha} \Bigl(\frac{\Gamma (\frac{N - \alpha}{4})}{\Gamma (\frac{N + \alpha}{4})}\Bigr)^2  \frac{1}{\abs{y}^{\frac{N}{2}}}.
\]
By proposition~\ref{propositionGroundStateloc} with \(\Omega = \R^N \setminus \{0\}\), \(A = 0\) and \(V = \frac{1}{2^\alpha} \frac{\Gamma (\frac{N - \alpha}{4})^2}{\Gamma (\frac{N + \alpha}{4})^2}\), we have the conclusion for \(\varphi \in C^1_c (\R^N \setminus \{0\})\). If \(\varphi \in L^2 (\R^N)\), one uses a classical density argument, passing to the limit with the help of the inequality.
\end{proof}

We now show how theorem\thinspace A can be deduced from theorem\thinspace A\cprime.

\begin{proof}[Proof of theorem\thinspace A]
Take \(\eta \in C ((0, \infty);[0, 1])\) such that \(\eta = 1\) on \((0, 1)\), \(\eta = 0\) on \((2, \infty)\).
Define for \(\lambda \ge 1\),
\[
  u_\lambda (x) = \eta \Bigl(\frac{\abs{x}}{\lambda}\Bigr) \eta \Bigl(\frac{1}{\lambda\abs{x}}\Bigr) \frac{1}{\abs{x}^{\frac{N}{2}}}.
\]
and estimate
\begin{multline*}
 \int_{\R^N} \int_{\R^N}
\frac{I_\alpha (x - y)}{\abs{x}^\frac{N}{2} \abs{y}^\frac{N}{2}}\,
\bigabs{u_\lambda (x)\abs{x}^\frac{N}{2} - u_\lambda (y) \abs{y}^\frac{N}{2}}^2\dif x \dif y\\
\le \int_{\R^{2 N} \setminus (B_\lambda \setminus B_{1/\lambda})^2 \setminus (B_{1 / 2 \lambda} \cup \R^N \setminus B_{2 \lambda})^2}
\frac{I_\alpha (x - y)}{\abs{x}^\frac{N}{2} \abs{y}^\frac{N}{2}} \dif x \dif y \hphantom{\qquad\qquad}\\
\le 2 \int_{B_{2 \lambda}} \int_{\R^N \setminus B_\lambda} \frac{I_\alpha (x - y)}{\abs{x}^\frac{N}{2} \abs{y}^\frac{N}{2}} \dif x \dif y 
+ 2 \int_{B_{1/ \lambda}} \int_{\R^N \setminus B_{1 / 2 \lambda}} \frac{I_\alpha (x - y)}{\abs{x}^\frac{N}{2} \abs{y}^\frac{N}{2}} \dif x \dif y.
\end{multline*}
By scale invariance, it suffices to note that
\[
 \int_{B_{2}} \int_{\R^N \setminus B_1} \frac{1}{\abs{x}^\frac{N}{2} \abs{x - y}^{N - \alpha} \abs{y}^\frac{N}{2}} \dif x \dif y < \infty,
\]
to show that
\[
 \sup_{\lambda \ge 1} \int_{\R^N} \int_{\R^N}
\frac{I_\alpha (x - y)}{\abs{x}^\frac{N}{2} \abs{y}^\frac{N}{2}}\,
\bigabs{u_\lambda (x)\abs{x}^\frac{N}{2} - u_\lambda (y) \abs{y}^\frac{N}{2}}^2\dif x \dif y < \infty.
\]
Since
\[
 \lim_{\lambda \to \infty} \int_{\R^N} \abs{u_\lambda}^2 = \infty,
\]
the conclusion follows.
\end{proof}

Now we consider Beckner's inequality:

\begin{proof}[Proof of theorem\thinspace B\/\cprime]
We begin as in the proof of theorem\thinspace A\cprime, taking for every \(x, y \in \R^N \setminus \{0\}\), \(u (x) = \abs{x}^{- \frac{N - 2}{2}}\) and
\[
  K (x, y) = \frac{I_\alpha (x - y)}{\abs{x}^\frac{\alpha + 2}{2}\abs{y}^\frac{\alpha + 2}{2}}.
\]
We compute now, by the semigroup property of the Riesz potentials, for every \(y \in \R^N \setminus \{0\}\),
\[
 \int_{\R^N \setminus \{0\}} K (x, y)\, u (x)\dif x
= \frac{1}{2^\alpha} \Bigl(\frac{\Gamma (\frac{N - \alpha}{4})}{\Gamma (\frac{N + \alpha}{4})}\Bigr)^2  \frac{1}{\abs{y}^{\frac{N + 2}{2}}}.
\]
On the other hand, we have for every \(x \in \R^N \setminus \{0\}\),
\[
  - \Delta u (x) =  \Bigl( \frac{N - 2}{2} \Bigr)^2 \frac{1}{\abs{x}^\frac{N + 2}{2}},
\]
the conclusion follows from proposition~\ref{propositionGroundStateloc}, taking \(A (x) = \frac{1}{2^{\alpha - 2}} \Bigl(\frac{\Gamma (\frac{N - \alpha}{4})}{(N - 2)\Gamma (\frac{N + \alpha}{4})}\Bigr)^2\) and \(V = 0\) and a density argument.
\end{proof}

We finally show how theorem\thinspace B follows:

\begin{proof}[Proof of theorem\thinspace B]
Choose \(\eta\) as in the proof of theorem\thinspace A, and define now
\[
  u_\lambda (x) = \eta \Bigl(\frac{\abs{x}}{\lambda}\Bigr) \eta \Bigl(\frac{1}{\lambda\abs{x}}\Bigr) \frac{1}{\abs{x}^{\frac{N - 2}{2}}}.
\]
One has, as in the proof of theorem\thinspace A,
\[
\sup_{\lambda \ge 1}
\int_{\R^N} \int_{\R^N}
\frac{I_\alpha (x - y)}{\abs{x}^\frac{N + \alpha}{2} \abs{y}^\frac{N + \alpha}{2}}
\bigl(u_\lambda (x)\abs{x}^\frac{N - 2}{2} - u_\lambda (y) \abs{y}^\frac{N - 2}{2}\bigr)^2\dif x \dif y < \infty
\]
and
\[
 \lim_{\lambda \to \infty} \int_{\R^N} \abs{\nabla u_\lambda}^2 = \infty.
\]
In order to conclude, note that if \(\lambda \ge 1\),
\begin{equation*}
\begin{split}
 \int_{\R^N} \frac{\bigabs{\nabla(\abs{x}^\frac{N - 2}{2} u_\lambda (x))}^2}{\abs{x}^{N - 2}} \dif x
&= \int_{B_{2 \lambda} \setminus B_\lambda} \frac{\eta'(\abs{x}/\lambda)^2}{\lambda^2 \abs{x}^{N - 2}} \dif x +
\int_{B_{1/ \lambda} \setminus B_{1 / 2 \lambda}} \frac{\lambda^2 \eta'(\lambda/\abs{x})^2}{\abs{x}^{N + 2}} \dif x\\
&= \int_{B_{2} \setminus B_1} \frac{\eta'(\abs{z})^2}{\abs{z}^{N - 2}} \dif z +
\int_{B_{1} \setminus B_{1 / 2}} \frac{\eta'(1/\abs{z})^2}{\abs{z}^{N + 2}} \dif z,
\end{split}
\end{equation*}
whose right-hand side does not depend on \(\lambda\).
\end{proof}

\section{A nonlocal groundstate representation for the fractional Laplacian}\label{sectionNonLocalGroundstate}

A version of a nonlocal groundstate representation for the fractional Laplacian \((-\Delta)^{s/2}\) with \(0<s<2\)
was introduced by R.\thinspace Frank, E.\thinspace Lieb and R.\thinspace Seiringer in \cite{FLS2008}*{section 3} and \cite{FrSe2008},
where (amongst other things) it was used to obtain an alternative proof of the fractional Hardy's inequality \eqref{in-Hardy}.

In this section we are going to derive a version of the groundstate representation
for the nonlocal equation with a fractional Laplacian in the left and an integral operator in the right-hand side.

\begin{proposition}
\label{propositionGroundState}
Let \(N \ge 1\), \(s \in (0, 2)\) and \(s<N\).
Let \(K : \R^N \times \R^N \to [0, \infty)\) be measurable symmetric,
that is \(K (x, y) = K (y, x)\) for a.e.~\(x, y \in \R^N\).
Let $u\in L^1_\loc(\R^N)$ and assume that
\begin{equation*}
\int_{\R^N} \frac{u (x)}{1 + \abs{x}^{N + s}} \,d x < \infty\quad\text{and}\quad
\int_{\R^N} K (\cdot, y)\, u (y) \dif y \in L^1_{\mathrm{loc}} (\R^N).
\end{equation*}
If for every nonnegative \(\psi \in C^\infty_c (\R^N)\) it holds
\begin{equation*}
  \int_{\R^N}  \int_{\R^N} \frac{\bigl(u (x) - u (y)\bigr)\bigl(\psi (x) - \psi (y)\bigr)}{\abs{x - y}^{N + s}} \dif x \dif y \,  = \int_{\R^N} \int_{\R^N} K (x, y)\,u (y)\,\psi (x) \dif y\dif x ,
\end{equation*}
and \(u^{- 1} \in L^\infty_{\mathrm{loc}} (\R^N)\), then for every \(\varphi \in \dot{H}^{s/2}(\R^N)\) it holds
\begin{multline*}
\int_{\R^N} \int_{\R^N} \frac{\abs{\varphi (x) - \varphi (y)}^2}{\abs{x - y}^{N + s}} \dif x \dif y
 = \int_{\R^N} \int_{\R^N} K (x, y)\, \varphi (x)\, \varphi (y) \dif y\dif x \\*
 \shoveright{+ \frac{1}{2} \int_{\R^N} \int_{\R^N} K (x,  y)\, u (x)\, u (y) \Bigl(\frac{\varphi (x)}{u (x)} - \frac{\varphi (y)}{u (y)} \Bigr)^2\dif y\dif x } \\*
 + \int_{\R^N}\int_{\R^N}\Bigabs{\frac{\varphi(x)}{u(x)}-\frac{\varphi(y)}{u(y)}}^2 \frac{u(x)\, u(y) }{\abs{x-y}^{N+s}}\dif x  \dif y.
\end{multline*}
\end{proposition}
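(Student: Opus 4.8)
The plan is to mimic the proof of Proposition~\ref{propositionGroundStateloc}, replacing the Dirichlet integral by the Gagliardo bilinear form and exploiting the algebraic identity that underlies the Frank--Lieb--Seiringer nonlocal groundstate transformation. First I would record the pointwise identity: for any numbers with \(u(x), u(y) > 0\),
\[
\bigl(\varphi(x) - \varphi(y)\bigr)^2
= \bigl(u(x) - u(y)\bigr)\Bigl(u(x)\tfrac{\varphi(y)^2}{u(y)} - u(y)\tfrac{\varphi(x)^2}{u(x)} - \varphi(x)^2 + \varphi(y)^2\Bigr) \cdot(-1) + \dots
\]
more precisely the clean version
\[
\bigl(\varphi(x) - \varphi(y)\bigr)^2
= \bigl(u(x) - u(y)\bigr)\Bigl(\tfrac{\varphi(x)^2}{u(x)} - \tfrac{\varphi(y)^2}{u(y)}\Bigr)
+ u(x)\,u(y)\Bigl(\tfrac{\varphi(x)}{u(x)} - \tfrac{\varphi(y)}{u(y)}\Bigr)^2,
\]
which is verified by expanding both sides. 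Dividing by \(\abs{x-y}^{N+s}\) and integrating gives
\[
\int_{\R^N}\int_{\R^N}\frac{\abs{\varphi(x)-\varphi(y)}^2}{\abs{x-y}^{N+s}}\dif x\dif y
= \int_{\R^N}\int_{\R^N}\frac{\bigl(u(x)-u(y)\bigr)\bigl(\psi(x)-\psi(y)\bigr)}{\abs{x-y}^{N+s}}\dif x\dif y
+ \int_{\R^N}\int_{\R^N}\Bigabs{\tfrac{\varphi(x)}{u(x)}-\tfrac{\varphi(y)}{u(y)}}^2\frac{u(x)\,u(y)}{\abs{x-y}^{N+s}}\dif x\dif y,
\]
where \(\psi = \varphi^2/u\). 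The first term on the right is exactly the left-hand side of the hypothesized weak equation tested against \(\psi\); substituting \(\int\int K(x,y)\,u(y)\,\psi(x)\) and then applying the symmetrization of \(K\) precisely as in the proof of Proposition~\ref{propositionGroundStateloc} (writing \(u(x)\varphi(y)^2/u(y)\) as the average of itself and its transpose, which splits into the cross term \(\varphi(x)\varphi(y)\) plus \(\tfrac12 K u(x)u(y)(\varphi(x)/u(x)-\varphi(y)/u(y))^2\)) yields the stated identity.

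The main obstacle is \textbf{justifying that \(\psi = \varphi^2/u\) is an admissible test function} and that all the integrals above are finite and the manipulations legitimate, since \(u\) is only assumed to be in \(L^1_\loc\) with the weighted integrability \(\int u(x)/(1+\abs{x}^{N+s})\,\dif x < \infty\) and \(u^{-1}\in L^\infty_\loc\), while \(\psi\) need not be smooth or compactly supported. I would handle this by first establishing the identity for \(\varphi \in C^\infty_c(\R^N)\): then \(\psi\) is bounded (because \(u^{-1}\in L^\infty_\loc\)), compactly supported, and in \(\dot H^{s/2}\), and one extends the class of admissible \(\psi\) in the weak equation from \(C^\infty_c\) to such functions by a density/truncation argument — approximating \(\psi\) by mollification and using the weighted integrability of \(u\) to control the tails of the Gagliardo form, exactly the kind of regularization invoked in Proposition~\ref{propositionGroundStateloc} and in \cite{Fall-II}. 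One must also check that each individual double integral in the final identity converges (not just their combination); here the key point is that all three remainder-type integrands are nonnegative, so once the identity is known for one admissible \(\varphi\) the finiteness of the left-hand side forces finiteness of each piece on the right.

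Finally, to pass from \(\varphi\in C^\infty_c(\R^N)\) to general \(\varphi\in\dot H^{s/2}(\R^N)\), I would use a density argument: take \(\varphi_n\in C^\infty_c\) with \(\varphi_n\to\varphi\) in \(\dot H^{s/2}\), so the left-hand sides converge; for the right-hand side, the cross term \(\int\int K\varphi(x)\varphi(y)\) is controlled by Theorem~A (or rather its scale-invariant consequence, since \(K\) here is the Herbst kernel up to weights) and passes to the limit, while the two nonnegative remainder terms are handled by Fatou's lemma in one direction and by the identity itself (rearranged) in the other. One subtlety worth flagging: the quotients \(\varphi/u\) may be badly behaved where \(u\) is large, but since in all intended applications \(u(x)\) is a fixed power \(\abs{x}^{-(N-s)/2}\) and \(u^{-1}\) is locally bounded away from the origin, the convergence \(\varphi_n/u \to \varphi/u\) in the relevant weighted norm is straightforward; I would state the density step at this level of generality and leave the routine estimates to the reader.
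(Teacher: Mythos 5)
Your overall architecture coincides with the paper's: the polarization identity, the symmetrization of \(K\) exactly as in proposition~\ref{propositionGroundStateloc}, and a final density step from \(C^\infty_c\) to \(\dot H^{s/2}\). You also correctly single out the admissibility of \(\psi=\varphi^2/u\) as the crux. The gap is in how you propose to resolve it. You suggest mollifying \(\psi=\varphi^2/u\) and "passing to the limit in the Gagliardo form, using the weighted integrability of \(u\) to control the tails". But the obstruction is not at the tails: it is that the limiting bilinear pairing
\[
\int_{\R^N}\int_{\R^N}\frac{\bigl(u(x)-u(y)\bigr)\bigl(\psi(x)-\psi(y)\bigr)}{\abs{x-y}^{N+s}}\dif x\dif y,
\qquad \psi=\frac{\varphi^2}{u},
\]
is in general not absolutely convergent near the diagonal, because \(u\) is only assumed to lie in \(L^1_\loc\); in the application of section~5 one takes \(u(x)=\abs{x}^{-(N-s)/2}\notin H^{s/2}_\loc(\R^N)\), and the remark following the proposition stresses that dispensing with the hypothesis \(u\in H^{s/2}\) of \cite{FLS2008} is the whole point. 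So there is no well-defined object to pass to the limit \emph{to}, and the polarization identity cannot simply be "divided by \(\abs{x-y}^{N+s}\) and integrated" as you write it, since the two pieces on its right-hand side cannot be split off individually.

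The paper's resolution is a specific regularization you would need to reproduce: take \(\psi_\delta=\check{\eta}_\delta\ast\frac{\varphi^2}{\eta_\delta\ast u}\) — note that \(u\) is mollified \emph{inside the denominator} — which is a genuine \(C^\infty_c\) test function because \(\eta_\delta\ast u\) is smooth and locally bounded below. A change of variables then transfers the mollifier from the test function onto \(u\), so the left-hand side becomes the Gagliardo pairing of \(\eta_\delta\ast u\) against \(\varphi^2/(\eta_\delta\ast u)\); since numerator and denominator now involve the \emph{same} regularized function, the polarization identity applies at fixed \(\delta\), where it rewrites the pairing as the difference of two meaningful integrals, namely \(\iint\abs{\varphi(x)-\varphi(y)}^2\abs{x-y}^{-N-s}\) and \(\iint\bigabs{\tfrac{\varphi}{\eta_\delta\ast u}(x)-\tfrac{\varphi}{\eta_\delta\ast u}(y)}^2(\eta_\delta\ast u)(x)(\eta_\delta\ast u)(y)\abs{x-y}^{-N-s}\); only then is \(\delta\to0\) taken by dominated convergence. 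If you mollify \(\psi\) alone, the change of variables leaves the unmollified \(u\) in the denominator paired against \(\eta_\delta\ast u\), and the polarization identity no longer closes. The rest of your argument (the \(K\)-symmetrization, nonnegativity/Fatou for the remainder terms, density in \(\dot H^{s/2}\)) matches the paper and is fine.
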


In the case \(K(x,y)=0\) the above result improves upon \cite{FLS2008}*{proposition 4.1},
where instead of \(\int_{\R^N} \frac{u (x)}{1 + \abs{x}^{N + s}} \,d x< \infty\) a stronger assumption \(u\in H^{s/2}(\R^N)\) was required.
A similar improvement was obtained recently in \cite{Fall-III}*{lemma 2.10}.
In Section \ref{sectionNonLocalInequalities} we will use the groundstate representation with respect to a function \(u(x)=\abs{x}^{- (N-s) / 2}\not\in H^{s/2}_\loc(\R^N)\),
so such improvement is indeed important.

\begin{proof}[Proof of proposition~\ref{propositionGroundState}]

First note that since \(u^{-1}\in L^\infty_{\mathrm{loc}}(\R^N)\), for arbitrary \(\varphi\in C^\infty_c(\R^N)\)
we have \(\psi = \varphi^2 / u\in L^\infty_c(\R^N)\).

Let \(\eta\in C^\infty_c(\R^N)\) be such that \(\supp \eta \subset B_1\), \(\int_{\R^N} \eta = 1\) and \(\eta \ge 0\).
For \(\delta>0\) and \(x \in \R^N\), let \(\eta_\delta(x)=\delta^{-N}\eta (x/\delta)\) and let \(\check{\eta}_{\delta} (x) = \eta_\delta (-x)\).
Given \(\varphi \in C^\infty_c(\Omega)\) and \(\delta>0\), we can thus take \(\psi_\delta=\check{\eta}_\delta \ast \frac{\varphi^2}{\eta_\delta \ast u}\in C^\infty_c(\Omega)\) as a test function in the equation.
We will handle each of the terms separately.

Since \(u \in L^1_\loc(\R^N)\), we have \(\eta_\delta \ast u \to u\) in \(L^1_\mathrm{loc} (\R^N)\) and almost everywhere in \(\R^N\) as \(\delta \to 0\).
By our assumption and Lebesgue's dominated convergence, we obtain
\begin{multline*}
 \int_{\R^N}\int_{\R^N} K (x, y) \,u (x)\, \Bigl(\check{\eta}_\delta \ast \frac{\varphi^2}{\eta_\delta \ast u}\Bigr)(y)\dif y\dif x =
 \int_{\R^N}\Big(\eta_\delta \ast\int_{\R^N} K (x, y)\, u (x)\dif x \Big)\, \frac{\varphi^2}{\eta_\delta \ast u}(y)\dif y\\
 \to \int_{\R^N}\int_{\R^N} K (x, y)\, u (x)\, \frac{\varphi^2}{u}(y)\dif y\dif x ,
\end{multline*}
as \(\delta\to 0\).
Since \(K\) is symmetric, as in the proof of proposition~\ref{propositionGroundStateloc}, the latter could be transformed as
\begin{multline*}
\int_{\R^N}\int_{\R^N} K (x, y)\, u (x) \frac{\varphi (y)^2}{u (y)} \dif y\dif x \\*
=\frac{1}{2} \int_{\R^N}\int_{\R^N} K (x, y) \Bigl( u (x) \frac{\varphi (y)^2}{u (y)} + u (y) \frac{\varphi (x)^2}{u (x)}\Bigr) \dif y\dif x \\
=  \int_{\R^N}\int_{\R^N} K (x, y)\, \varphi (x)\, \varphi (y) \dif y\dif x \\
  + \frac{1}{2} \int_{\R^N} \int_{\R^N} K (x,  y)\, u (x)\, u (y)\, \Bigl(\frac{\varphi (x)}{u (x)} - \frac{\varphi (y)}{u (y)} \Bigr)^2\dif y\dif x .
\end{multline*}

In the left-hand side, by a change of variable
\begin{multline*}
\int_{\R^N}\int_{\R^N}\frac{\bigl(u(x) - u(y)\bigr)\Big(\bigl(\check{\eta_\delta} \ast \frac{\varphi^2}{\eta_\delta \ast u}(x)\bigr)-\bigl(\check{\eta_\delta} \ast \frac{\varphi^2}{\eta_\delta \ast u}\bigr)(y)\Big)}{\abs{x-y}^{N+s}}\dif x  \dif y\\
=\int_{\R^N}\int_{\R^N} \int_{\R^N} \frac{\bigl( u(x)-u(y)\big)\Big(\eta_\delta (-z) \frac{\varphi^2}{\eta_\delta \ast u}(x-z)-\eta_\delta (-z)\frac{\varphi^2}{\eta_\delta \ast u}(y-z)\Big)}{\abs{x-y}^{N+s}}\dif z \dif x  \dif y\\
\shoveright{=\int_{\R^N}\int_{\R^N} \int_{\R^N} \frac{\big(\eta_\delta (w) u(x-w)-\eta_\delta (w) u(y - w)\bigr)\Big(\frac{\varphi^2}{\eta_\delta \ast u}(x)-\frac{\varphi^2}{\eta_\delta \ast u}(y)\Big)}{\abs{x-y}^{N+s}}\dif w \dif x  \dif y}\\
=\int_{\R^N}\int_{\R^N}\frac{\big((\eta_\delta \ast u)(x)-(\eta_\delta \ast u)(y)\big)\Big(\frac{\varphi^2}{\eta_\delta \ast u}(x)-\frac{\varphi^2}{\eta_\delta \ast u}(y)\Big)}{\abs{x-y}^{N+s}}\dif x  \dif y.
\end{multline*}
Similarly to \cite{FLS2008}*{proposition 4.1}, we obtain
\begin{multline*}\label{polar}
\int_{\R^N}\int_{\R^N}\frac{\big((\eta_\delta \ast u)(x)-(\eta_\delta \ast u)(y)\big)\Big(\frac{\varphi^2}{\eta_\delta \ast u}(x)-\frac{\varphi^2}{\eta_\delta \ast u}(y)\Big)}{\abs{x-y}^{N+s}}\dif x  \dif y\\
=\int_{\R^N}\int_{\R^N}\frac{\abs{\varphi(x)-\varphi(y)}^2-\bigabs{\frac{\varphi}{\eta_\delta \ast u}(x)-\frac{\varphi}{\eta_\delta \ast u}(y)}^2 (\eta_\delta \ast u)(x)\, (\eta_\delta \ast u)(y)}{\abs{x-y}^{N+s}}\dif x  \dif y\\
\to\int_{\R^N}\int_{\R^N}\frac{\abs{\varphi(x)-\varphi(y)}^2-\bigabs{\frac{\varphi}{u}(x)-\frac{\varphi}{u}(y)}^2 u(x) u(y)}{\abs{x-y}^{N+s}}\dif x  \dif y,
\end{multline*}
as \(\delta\to 0\), again by Lebesgue's dominated convergence theorem.
\end{proof}

\section{Proofs of the inequalities in fractional spaces}\label{sectionNonLocalInequalities}

In order to deduce the quantitative groundstate representation of theorem\thinspace C\cprime{}
from its more general version of proposition~\ref{propositionGroundState}.

\begin{proof}[Proof of theorem\thinspace C\/\cprime]
In proposition \ref{propositionGroundState}, take
\begin{align*}
u (x) &= \frac{1}{\abs{x}^{\frac{N-s}{2}}} &
&\text{and }&
K (x, y) &= \mathcal{C}_{N, \alpha, s}\frac{I_\alpha (x - y)}{\abs{x}^\frac{\alpha+s}{2}\abs{y}^\frac{\alpha+s}{2}}.
\end{align*}
Since by \cite{MazyaNagel},
\[
 \mathcal{D}_{N, s}\int_{\R^N}\int_{\R^N}\frac{\bigl(\varphi(x)-\varphi(y)\bigr)\bigl(u (x) - u(y)\bigr)}{\abs{x-y}^{N+s}}\dif x  \dif y = \int_{\R^N} \overline{\widehat{\varphi} (\xi)}\, \abs{\xi}^s\, \widehat{u} (\xi)\,d\xi
\]
and
\[
 \Hat{I}_{\gamma} (\xi) = \frac{1}{\abs{\xi}^{\gamma}},
\]
where the Fourier transform \(\hat\varphi\) is defined for every \(\xi \in \R^N\) by
\[
 \hat\varphi(\xi)=\frac{1}{(2 \pi)^\frac{N}{2}}\int_{\R^N}\varphi(x)e^{-i \xi\cdot x}\dif x ,
\]
we compute
\begin{equation*}
\begin{split}
\mathcal{D}_{N, s}\int_{\R^N}\int_{\R^N}\frac{\bigl(\varphi(x)-\varphi(y)\bigr)\bigl(u (x) - u(y)\bigr)}{\abs{x-y}^{N+s}}\dif x  \dif y
&= \int_{\R^N} \overline{\widehat{\varphi} (\xi)}\, \abs{\xi}^s \, I_{\frac{N - s}{2}} (\xi)\,d\xi\\
&= 2^{s}\Bigl(\frac{\Gamma\big(\frac{N+s}{4}\big)}{\Gamma\big(\frac{N-s}{4}\big)}\Bigr)^2 \int_{\R^N} \overline{\widehat{\varphi} (\xi)}\, \abs{\xi}^s\, I_{\frac{N + s}{2}} (\xi)\,d\xi\\
&= 2^{s} \Bigl(\frac{\Gamma\big(\frac{N+s}{4}\big)}{\Gamma\big(\frac{N-s}{4}\big)}\Bigr)^2 \int_{\R^N} \frac{\varphi (x)}{\abs{x}^{\frac{N + s}{2}}}\dif x .
\end{split}
\end{equation*}

On the other hand, by the semigroup property of the Riesz potentials \cite{Ri1949},
for \(0 < \alpha <\beta< N\)
\[
\frac{1}{\abs{x}^{\frac{\alpha+s}{2}}}\bigg(I_\alpha\ast\frac{1}{\abs{x}^{-\frac{N+\alpha}{2}}}\bigg)\\
=
2^{-\alpha}\Bigl(\frac{\Gamma\bigl(\tfrac{N-\alpha}{4}\bigr)}{\Gamma\big(\tfrac{N+\alpha}{4}\big)}
\Bigr)^2 \frac{1}{\abs{x}^{\frac{N+s}{2}}}.
\]
By proposition~\ref{propositionGroundState}, we reach the required conclusion for \(\varphi \in C^\infty_c(\R^N)\).
If \(\varphi \in H^{s/2}(\R^N)\), one uses a classical density argument,
passing to the limit with the help of inequality \eqref{in-SW}.
\end{proof}

We now show how optimality of the constant \(\mathcal{C}_{N, \alpha, s}\) can be deduced using the remainder terms of the groundstate representation of theorem\thinspace C\cprime.

\begin{proof}[Proof of theorem\thinspace C from theorem\thinspace C\/\cprime]
Take \(\eta \in C ((0, \infty);[0, 1])\) such that \(\eta = 1\) on \((0, 1)\), \(\eta = 0\) on \((2, \infty)\).
Define for \(s\in(0, 2)\) and \(\lambda \ge 1\),
\[
  u_\lambda(x): = \eta \Bigl(\frac{\abs{x}}{\lambda}\Bigr) \eta \Bigl(\frac{1}{\lambda\abs{x}}\Bigr) \frac{1}{\abs{x}^{\frac{N-s}{2}}}.
\]
We shall estimate the remainders in theorem\thinspace C\cprime.

For \(\alpha\in(0,N)\) we obtain
\begin{equation*}
\begin{split}
\mathcal{J}_\alpha(u_\lambda)&:=\int_{\R^N} \int_{\R^N}
\frac{I_\alpha (x - y)}{\abs{x}^\frac{N+\alpha}{2} \abs{y}^\frac{N+\alpha}{2}}
\bigabs{u_\lambda (x)\abs{x}^\frac{N-s}{2} - u_\lambda (y) \abs{y}^\frac{N-s}{2}}^2\dif x \dif y\\
&\le 2 \int_{B_{2 \lambda}} \int_{\R^N \setminus B_\lambda} \frac{I_\alpha (x - y)}{\abs{x}^\frac{N+\alpha}{2} \abs{y}^\frac{N+\alpha}{2}} \dif x \dif y
+ 2 \int_{B_{1/ \lambda}} \int_{\R^N \setminus B_{1 / 2 \lambda}} \frac{I_\alpha (x - y)}{\abs{x}^\frac{N+\alpha}{2} \abs{y}^\frac{N+\alpha}{2}} \dif x \dif y.
\end{split}
\end{equation*}
By scale invariance, it suffices to note that
\[
 \int_{B_{2}} \int_{\R^N \setminus B_1} \frac{I_\alpha(x-y)}{\abs{x}^\frac{N+\alpha}{2}\abs{y}^\frac{N+\alpha}{2}} \dif x \dif y < \infty,
\]
in order to conclude that
\[
 \limsup_{\lambda \to \infty} \mathcal{J}_\alpha(u_\lambda) < \infty.
\]

For \(0<s<\min\{2,N\}\) we obtain
\begin{multline*}
\Rem_s(u_\lambda):=
\int_{\R^N} \int_{\R^N}
\frac{\bigabs{u_\lambda(x)\abs{x}^\frac{N - s}{2}- u_\lambda(y)\abs{y}^\frac{N - s}{2}}^2}{\abs{x}^\frac{N - s}{2}\abs{x-y}^{N+s}\abs{y}^\frac{N - s}{2}}\dif x \dif y,\\
\le \int_{B_{2 \lambda}} \int_{\R^N \setminus B_\lambda} \frac{1}{\abs{x}^\frac{N-s}{2}\abs{x-y}^{N+s}\abs{y}^\frac{N-s}{2}} \dif x \dif y\\
+ \int_{B_{1/ \lambda}} \int_{\R^N \setminus B_{1 / 2 \lambda}} \frac{1}{\abs{x}^\frac{N-s}{2}\abs{x-y}^{N+s}\abs{y}^\frac{N-s}{2}} \dif x \dif y.
\end{multline*}
As before, note that
\[
 \int_{B_{2}} \int_{\R^N \setminus B_1} \frac{1}{\abs{x}^\frac{N-s}{2}\abs{x-y}^{N+s}\abs{y}^\frac{N-s}{2}} \dif x \dif y < \infty,
\]
in order to conclude by scale invariance that for \(s \in (0, 2)\),
\[
 \limsup_{\lambda \to \infty} \Rem_s(u_\lambda) < \infty.
\]

Finally, note that
\begin{multline*}
  \lim_{\lambda \to \infty} \int_{\R^N}\int_{\R^N}
\frac{\bigabs{u_\lambda (x)-u_\lambda(y)}^2}{\abs{x-y}^{N+s}}\dif x \, \dif y
  = \int_{\R^N}\int_{\R^N} \frac{1}{\abs{x-y}^{N+s}} \biggabs{\frac{1}{\abs{x}^\frac{N - s}{2}} - \frac{1}{\abs{y}^\frac{N - s}{2}} }^2\dif x  \dif y = \infty,
\end{multline*}
so the conclusion follows.
\end{proof}

\begin{bibdiv}
\begin{biblist}

\bib{Ag1983}{article}{
   author={Agmon, Shmuel},
   title={On positivity and decay of solutions of second order elliptic
   equations on Riemannian manifolds},
   book={
      title={Methods of functional analysis and theory of elliptic equations},
      editor = {Greco, Donato},
      publisher={Liguori},
      place={Naples},
   },
   date={1983},
   pages={19--52},
}

\bib{Ag1985}{article}{
   author={Agmon, Shmuel},
   title={Bounds on exponential decay of eigenfunctions of Schr\"odinger
   operators},
   conference={
      title={Schr\"odinger operators},
      address={Como},
      date={1984},
   },
   book={
      series={Lecture Notes in Math.},
      volume={1159},
      publisher={Springer},
      place={Berlin},
   },
   date={1985},
   pages={1--38},
}

\bib{Allegretto-74}{article}{
   author={Allegretto, W.},
   title={On the equivalence of two types of oscillation for elliptic
   operators},
   journal={Pacific J. Math.},
   volume={55},
   date={1974},
   pages={319--328},
   issn={0030-8730},
}

\bib{Be1995}{article}{
   author={Beckner, William},
   title={Pitt's inequality and the uncertainty principle},
   journal={Proc. Amer. Math. Soc.},
   volume={123},
   date={1995},
   number={6},
   pages={1897--1905},
   issn={0002-9939},
}

\bib{Be2008}{article}{
   author={Beckner, William},
   title={Pitt's inequality with sharp convolution estimates},
   journal={Proc. Amer. Math. Soc.},
   volume={136},
   date={2008},
   number={5},
   pages={1871--1885},
   issn={0002-9939},
}

\bib{Be2012}{article}{
   author = {Beckner, William},
   title = {Pitt's inequality and the fractional Laplacian: sharp error estimates},
   journal = {Forum Math.},
   volume = {24},
   date = {2012},
   pages = {177-209},
}

\bib{CyconFroeseKirschSimon}{book}{
   author={Cycon, H. L.},
   author={Froese, R. G.},
   author={Kirsch, W.},
   author={Simon, B.},
   title={Schr\"odinger operators with application to quantum mechanics and
   global geometry},
   series={Texts and Monographs in Physics},
   edition={Springer Study Edition},
   publisher={Springer-Verlag},
   place={Berlin},
   date={1987},
   pages={x+319},
   isbn={3-540-16758-7},
}

\bib{Fall-II}{article}{
   author={Fall, Mouhamed Moustapha},
   title={Nonexistence of distributional supersolutions of a semilinear elliptic equation with Hardy potential},
   note = {arXiv:1105.5886},
}

\bib{Fall-III}{article}{
   author={Fall, Mouhamed Moustapha},
   title={Semilinear elliptic equations for the fractional Laplacian with Hardy potential},
   note = {arXiv:1109.5530},
}

\bib{Fall-I}{article}{
   author={Fall, Mouhamed Moustapha},
   author={Musina, Roberta},
   title={Sharp nonexistence results for a linear elliptic inequality
   involving Hardy and Leray potentials},
   journal={J. Inequal. Appl.},
   volume = {2011},
   date={2011},
   number={917201},
}

\bib{FLS2008}{article}{
   author={Frank, Rupert L.},
   author={Lieb, Elliott H.},
   author={Seiringer, Robert},
   title={Hardy-Lieb-Thirring inequalities for fractional Schr\"odinger
   operators},
   journal={J. Amer. Math. Soc.},
   volume={21},
   date={2008},
   number={4},
   pages={925--950},
   }

\bib{FrSe2008}{article}{
   author={Frank, Rupert L.},
   author={Seiringer, Robert},
   title={Non-linear groundstate representations and sharp Hardy
   inequalities},
   journal={J. Funct. Anal.},
   volume={255},
   date={2008},
   number={12},
   pages={3407--3430},
   issn={0022-1236},
}

\bib{He1977}{article}{
   author={Herbst, Ira W.},
   title={Spectral theory of the operator
   \((p^{2}+m^{2})^{1/2}-Ze^{2}/r\)},
   journal={Comm. Math. Phys.},
   volume={53},
   date={1977},
   number={3},
   pages={285--294},
   issn={0010-3616},
}

\bib{Landkof}{book}{
    AUTHOR = {Landkof, N. S.},
     TITLE = {Foundations of modern potential theory},
     SERIES = {Grundlehren der Mathematischen Wissenschaften},
    VOLUME = {180},
 PUBLISHER = {Springer-Verlag},
   ADDRESS = {New York},
      YEAR = {1972},
     PAGES = {x+424},
}

\bib{MazyaNagel}{article}{
   author={Masja, Wladimir},
   author={Nagel, J{\"u}rgen},
   title={\"Uber \"aquivalente Normierung der anisotropen Funktionalr\"aume
   \(H^{\mu }(\mathbf{ R}^{n})\)},
   journal={Beitr\"age Anal.},
   number={12},
   date={1978},
   pages={7--17},
}

\bib{MoVS2012}{unpublished}{
 author = {Moroz, Vitaly},
 author = {Van Schaftingen, Jean},
 title={Nonexistence and optimal decay of supersolutions to Choquard equations in exterior domains},
 note = {arXiv:1203.3154},
}

\bib{MVS-ground}{article}{
  author = {Moroz, Vitaly},
  author = {Van Schaftingen, Jean},
  title = {Groundstates of nonlinear Choquard equations: existence, qualitative properties and decay asymptotics},
  note = {arXiv:1205.6286},
}

\bib{Piepenbrink-74}{article}{
   author={Piepenbrink, John},
   title={Nonoscillatory elliptic equations},
   journal={J. Differential Equations},
   volume={15},
   date={1974},
   pages={541--550},
   issn={0022-0396},
}

\bib{Ri1949}{article}{
   author={Riesz, Marcel},
   title={L'int\'egrale de Riemann-Liouville et le probl\`eme de Cauchy},
   journal={Acta Math.},
   volume={81},
   date={1949},
   pages={1--223},
   issn={0001-5962},
}

\bib{Silvestre2007}{article}{
   author={Silvestre, Luis},
   title={Regularity of the obstacle problem for a fractional power of the
   Laplace operator},
   journal={Comm. Pure Appl. Math.},
   volume={60},
   date={2007},
   number={1},
   pages={67--112},
   issn={0010-3640},
}

\bib{StWe1958}{article}{
   author={Stein, E. M.},
   author={Weiss, Guido},
   title={Fractional integrals on \(n\)-dimensional Euclidean space},
   journal={J. Math. Mech.},
   volume={7},
   date={1958},
   pages={503--514},
}

\bib{Yafaev1999}{article}{
   author={Yafaev, D.},
   title={Sharp constants in the Hardy--Rellich inequalities},
   journal={J. Funct. Anal.},
   volume={168},
   date={1999},
   number={1},
   pages={121--144},
}
\end{biblist}

\end{bibdiv}

\end{document}